\newtheorem{theorem}{Theorem}[section]
\newtheorem{cor}[theorem]{Corollary}
\newtheorem{lem}[theorem]{Lemma}
\theoremstyle{definition}
\newtheorem{defn}[theorem]{Definition}
\newtheorem{rem}[theorem]{Remark}
\numberwithin{equation}{section}
\newcommand{\A}{\mathcal{A}}
\newcommand{\V}{\mathcal{V}}
\begin{document}
\title[]{SOME COMBINATORIAL CONCEPTS NEAR AN IDEMPOTENT}
\author[A. Pashapournia \and M. A. Tootkaboni \and D. Ebrahim Bagha]%
{A. Pashapournia* \and M. A. Tootkaboni**  \and D. Ebrahimi Bagha*}

\newcommand{\acr}{\newline\indent}

\address{\llap{*\,}Department of Mathematics\acr
Faculty of Sciences\acr
 Islamic Azad University\acr
 Central Tehran Branch\acr
 Tehran, Iran}
\email{ali154067@gmail.com}

\address{\llap{**\,}Department of Pure Mathematics\acr
                    Faculty of Mathematical sciences\acr
                    University of Guilan\acr
                    Rasht, Iran}
\email{tootkaboni@guilan.ac.ir}

\address{\llap{*\,}Department of Mathematics\acr
Faculty of Sciences\acr
 Islamic Azad University\acr
 Central Tehran Branch\acr
 Tehran, Iran}
\email{e\underline{ }bagha@yahoo.com}

\subjclass[2020]{Primary 11B25; 43A55; Secondary 22A15; 54D80}
\keywords{$wap$-compactification, Idempotent, $J$-set, $C$-set, Central sets Theorem.\\
 $^\dag$Corresponding Author}
\begin{abstract}
A small part of real line which is very close to zero has rich combinatorial properties.
The aim of this paper is to express and then prove some locally combinatorial concepts near a virtual idempotent by considering  the $wap$-compactification of a semitopological semigroup $S$. The $wap-$compactification of a semitopological semigroup $S$, is denoted by $S^w$, the collection of all ultrafilters near an idempotent $\eta\in S^w$ forms a compact subsemigroup of $\beta S_d$, where $S_d$ denotes $S$ as discrete space.
\end{abstract}
\maketitle
\section{\textbf{Introduction}}

  Furstenberg defined the concept of central subset of the natural numbers $\mathbb{N}$ in \cite{1}. Also
  he stated that for every finite partition of natural numbers, one of the cells contains a central set. The following theorem is the original Central Sets Theorem:
 \begin{theorem} (\cite{1} Proposition 8.21) Let $l\in\mathbb{N}$ and for each $i \in\{1, 2,\ldots, l\}$, let $\langle y_{i,n}\rangle _{n=1}^{\infty}$ be
 a sequence in $\mathbb{Z}$. Let $C$ be a central subset of $\mathbb{N}$. Then there exist sequences $\langle  a_n\rangle _{n=1}^{\infty}$ in
 $\mathbb{N}$ and $\langle H_n\rangle _{n=1}^{\infty}$ in $P_f(\mathbb{N})$(=the collection of all nonempty finite subsets of $\mathbb{N}$), such that\\

 \indent {(1)} for all $n$, $\max H_n< \min H_{n+1}$, and\\
\indent {(2)} for all $F\in P_f(\mathbb{N})$ and all $i\in\{1, 2,\ldots, l\}$, $\sum_{n\in F}(a_n+\sum_{t\in H_n} y_{i,t}) \in C.$
 \end{theorem}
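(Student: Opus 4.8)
The plan is to prove the theorem through the algebraic structure of $\beta\mathbb{N}$ rather than the original dynamical argument. First I would exploit centrality directly: since $C$ is central there is a minimal idempotent $p\in K(\beta\mathbb{N})$ with $C\in p$. The workhorse will be the set
\[
C^{\star}=\{x\in C: -x+C\in p\},
\]
which satisfies $C^{\star}\in p$ together with the self-reproducing shift property that for every $x\in C^{\star}$ one has $-x+C^{\star}\in p$; this is the standard idempotent lemma, a consequence of $p+p=p$. Replacing the target condition ``$\in C$'' by ``$\in C^{\star}$'' is what makes an induction possible, since it lets me absorb one more term of a sum and stay inside a member of $p$.

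Next I would set up a simultaneous induction producing $a_n\in\mathbb{N}$ and $H_n\in P_f(\mathbb{N})$ with $\max H_n<\min H_{n+1}$, while maintaining the stronger hypothesis that for every nonempty $F\subseteq\{1,\dots,n\}$ and every $i\in\{1,\dots,l\}$ the partial sum $s(F,i)=\sum_{n\in F}(a_n+\sum_{t\in H_n}y_{i,t})$ lies in $C^{\star}$. In the inductive step, with $a_1,\dots,a_{n-1}$ and $H_1,\dots,H_{n-1}$ already chosen and $r=\max H_{n-1}$ (take $r=0$ when $n=1$), let $M$ be the finite collection of all partial sums $s(G,i)$ over nonempty $G\subseteq\{1,\dots,n-1\}$ and $i\le l$; by hypothesis $M\subseteq C^{\star}$. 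Put $B=C^{\star}\cap\bigcap_{x\in M}(-x+C^{\star})$. By the shift property each $-x+C^{\star}\in p$, so finiteness of $M$ and the ultrafilter property give $B\in p$. The step then reduces to the single existence claim: there are $a_n\in\mathbb{N}$ and $H_n\in P_f(\mathbb{N})$ with $\min H_n>r$ such that $a_n+\sum_{t\in H_n}y_{i,t}\in B$ for every $i\le l$. Granting this, I verify the hypothesis for $F\subseteq\{1,\dots,n\}$: if $n\notin F$ it is inherited; if $F=G\cup\{n\}$ with $G=\emptyset$ then $s(F,i)\in B\subseteq C^{\star}$, and if $G\neq\emptyset$ then $a_n+\sum_{t\in H_n}y_{i,t}\in B\subseteq -s(G,i)+C^{\star}$ forces $s(F,i)=s(G,i)+a_n+\sum_{t\in H_n}y_{i,t}\in C^{\star}$.

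The existence claim is the heart of the argument, and I expect it to be the main obstacle. Because $B\in p$ and $p$ is a minimal idempotent, $B$ is itself central, and the crucial structural input is that every central set is a $J$-set: applying the $J$-set richness to the finitely many sequences $t\mapsto y_{i,t}$ yields exactly such $a_n$ and $H_n$, and to force $\min H_n>r$ one applies it instead to the shifted sequences $t\mapsto y_{i,t+r}$ and translates the resulting index set by $r$. The genuinely substantive part is therefore proving that central sets are $J$-sets, i.e. that the set of ultrafilters all of whose members are $J$-sets forms a two-sided ideal of $\beta\mathbb{N}$ and hence contains $K(\beta\mathbb{N})$ and every minimal idempotent; this is where the combinatorial content truly lives. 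The remaining ingredients—the shift lemma for $C^{\star}$, the stability of $B$ under finite intersection inside $p$, and the translation trick controlling $\min H_n$—are routine, and once they are in place the theorem follows by reading off statement $(2)$ from the maintained hypothesis and statement $(1)$ from the construction.
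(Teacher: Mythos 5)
Your argument is essentially correct, but it is not the route the paper takes: the paper's ``proof'' of this theorem is only a citation to Furstenberg's Proposition 8.21, i.e.\ the original topological--dynamical argument via enveloping semigroups and IP-recurrence. What you have reconstructed is the algebraic proof in $\beta\mathbb{N}$ in the style of Bergelson and Hindman (which the paper mentions in the surrounding discussion, and which is exactly the skeleton the authors themselves use later for Theorem \ref{th1}, the Central Sets Theorem near an idempotent): minimal idempotent $p$, the star set $C^{\star}$ with the shift property from $p+p=p$, the finite set $M$ of partial sums, $B=C^{\star}\cap\bigcap_{x\in M}(-x+C^{\star})\in p$, and the reduction of the inductive step to the $J$-set property of members of $p$. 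The dynamical proof buys a statement free of ultrafilter machinery; your version buys the stronger $C^{\star}$-conclusion and generalizes immediately to arbitrary (commutative) semigroups and, as in this paper, to the setting near an idempotent of $S^w$. The identification of the genuinely substantive input --- that minimal idempotents lie in the ideal of ultrafilters all of whose members are $J$-sets, equivalently that piecewise syndetic (hence central) sets are $J$-sets --- is also the right one.

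One concrete gap to repair: the sequences $\langle y_{i,n}\rangle$ take values in $\mathbb{Z}$, not in $\mathbb{N}$, so the $J$-set property of $B\subseteq\mathbb{N}$ as you invoke it (which quantifies over $F\in P_f({}^{\mathbb{N}}\mathbb{N})$) does not literally apply to the maps $t\mapsto y_{i,t}$, and likewise your translation trick $t\mapsto y_{i,t+r}$ stays $\mathbb{Z}$-valued. The standard fix is a shift: choose $c_t\in\mathbb{N}$ with $c_t>|y_{i,t}|$ for all $i\le l$, apply the $J$-set property to the $\mathbb{N}$-valued sequences $t\mapsto y_{i,t}+c_t$, and then absorb the resulting constant $\sum_{t\in H_n}c_t$ into $a_n$, noting that $a_n+\sum_{t\in H_n}c_t\in\mathbb{N}$; alternatively one works in $(\mathbb{Z},+)$ and checks that $C$ remains central there and that the produced $a_n$ can be taken positive. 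Without some such step the key existence claim in your induction is not justified as stated. With it, the proof is complete.
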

 In \cite{2}, Furstenberg and Katznelson developed a technique so it was used later  to provide another proof of the above Theorem (See \cite{3}). The following theorem is a new version of Central Sets Theorem assuming that $S$ is a commutative semigroup.
 \begin{theorem}(\cite{hindbook} Theorem 14.8.4)\label{theorem21}
For a central subset $A$ of a commutative semigroup $(S,+)$, there are functions $\alpha : P_{f}(^\mathbb{N}S)\rightarrow S$ and
$H : P_{f}(^\mathbb{N}S)\rightarrow P_{f}(S)$ such that\\

 \indent {(1)} if $\emptyset\neq F\subsetneq G\in P_{f}(^\mathbb{N}$, then $\max H(F)<\min H(G)$, and\\
\indent {(2)} if $m\in \mathbb{N}$, $\{X_i\}_{i=1}^m\subseteq P_{f}(^\mathbb{N}S)$, $X_{1}\subsetneq X_{2}\subsetneq \cdots\subsetneq X_{m}$,
and  $f_{i}\in X_{i}$ for each $i\in\{1,2,\ldots,m\}$, then $\sum_{i=1}^{m}(\alpha(X_{i})+\sum_{t\in H(X_{i})}f_{i}(t)) \in A$.
\end{theorem}
Each set satisfying the conclusion of the Central Sets Theorem is called $C$-set. Let $A$ be a subset of
commutative semigroup $(S,+)$, whenever
$F \in P_{f}(^\mathbb{N}S)$, there exist $a\in S$ and $H\in P_{f}(\mathbb{N})$ such that
\[
a+\sum_{t\in H}f(t)\in A
\]
for each $f\in F$, we say that $A$ is a $J-$set.

Let $(S,+)$ be an arbitrary infinite semigroup. The spectrum of $l^\infty(S)$ is called the Stone-$\check{C}$ech of $S$ and is denoted by $\beta S$.
It is the biggest semigroup compactification of $S$. In other words,\\
(a) right translation for every $x\in\beta S$ and left translation for each
$s\in S$ are continuous,\\
(b) the topological center of $\beta S$ is a dense subset of $\beta S$, and \\
(c) if $(X,\phi)$ is a semigroup compactification of $S$, then there exists an onto homomorphism $\psi:\beta S\rightarrow X$ such that
$\psi\circ \varepsilon=\phi$, where $\varepsilon:S\rightarrow \beta S$ is evaluation map.
For more details of semigroup of compactification see \cite{Analyson}.

 The subset
 $L$ is said to be a left ideal of $S$ if $s+l\in L$ for every $s\in S$ and every $l\in L$. And $L$ is a minimal left ideal of $S$ if
 for each left ideal $J$ of $S$, whenever $J\subseteq L$ one gets $J=L$. The minimal
 right ideal is defined analogously. An element $x\in S$ is an idempotent if
 $x+x=x$ and $E(S)$ denotes the collection of all idempotents in $S$. If $S$ is a compact Hausdorff right topological semigroup, then
   the minimal ideal, $K(S)$, exists and $K(S)$ is a disjoint union of minimal right ideals as well as a disjoint union of minimal left ideals. Every idempotent in $K(S)$ is called minimal idempotent.  Every element of a minimal idempotent is called central set. Also a subset $A$ of $S$ is called central$^*$ set if it is a member of
every minimal idempotent. See \cite{hindbook}.

 An ultrafilter $p$ on $S=(0,+\infty)$ is called near zero if $(0,\varepsilon)\in p$ for all $\varepsilon>0$. Let
 \[
 0^+=\{p\in \beta S_d:\forall\epsilon>0\,\,\, (0,\epsilon)\in p\}.
 \]
 Then $(0^+,+)$ is a closed subsemigroup of $\beta S_d$.
In \cite{Hin-Lead}, N. Hindman and I. Leader stated some combinatorial results in real numbers near zero. They introduced also new combinatorial applications of the sets which are central near zero. For more detail see \cite{De-Hin, De-Hin-Str}.

Let $S\subseteq (0,\infty)$ be an additive dense subsemigroup. The set of all
function $f:\mathbb{N}\rightarrow S$ such that $lim_{n\rightarrow\infty}f(n)=0$ is
denoted by $\mathcal{T}_0$. The set $A\subseteq S$ is a central set near zero if
there exists an idempotent $p$ contained in smallest ideal of $0^+(S)$ with $A\in p$.\\
In \cite{ba}, authors gave definitions of  the $J$-set and the $C$-set near zero and stated the Central sets Theorem near zero.
As a natural consequence, has been proved
\[
J_0(S)=\{p\in 0^+(S) : \forall A \in p, \ A\  is \  a \  J-set\  near\  zero \}.
\]
is a two side ideal of $0^+(S)$, and every element of idempotent of $J_0(S)$ is $C$-set.
\begin{rem}
Since $(\mathbb{N},+)$ is a commutative semigroup, so the set $\mathbb{N}^w$ as
$wap$-compactification of the natural numbers is a commutative compact semitopological semigroup. By Lemma \ref{lem2.17}, there exists
a surjective continuous homomorphism $\pi:\beta \mathbb{N}\rightarrow\mathbb{N}^w$ such that $\pi(p)\subseteq p$ for every
$p\in\beta \mathbb{N}$ and $\pi(K(\beta \mathbb{N}))=K(\mathbb{N}^w)$. By Theorem 21.19 and Corollary 2.40 in \cite{hindbook},
$K(\mathbb{N}^w)$ is a compact topological group so $|E(K(\mathbb{N}^w))|=1$. Moreover, if $\eta$ is the minimal idempotent of $\mathbb{N}^w$, therefore $\eta\subseteq p$ for every $p\in E(K(\beta \mathbb{N}))$, and so $E(K(\beta \mathbb{N}))\subseteq\eta^*$.
\end{rem}
The concept of $J$-set near $\eta$ is partition regular, see Definition 3.1, and Lemma 4.9. So
\[
J_\eta(\mathbb{N})=\{p\in\beta \mathbb{N}:\forall A\in p,\mbox{ $A$ is $J$-set near $\eta$.}\}.
\]
is a nonempty set and by Theorem 4.10 is a two sided ideal. As a consequence, we will have $E(K(\beta \mathbb{N}))\subseteq J_\eta(\mathbb{N})\subseteq \eta^*$,
and so every central$^*$ set is a $J$-set near $\eta$, and so is every central set.
Now let
$$M=\{p\in\beta \mathbb{N}:\forall A\in p,\mbox{ $A$ is an additive central set}\}.$$
Then, according to Theorem 16.24 in \cite{hindbook}, we get $M=cl(E(K(\beta\mathbb{N},+))\subseteq\eta^*$ is a left ideal of $(\beta\mathbb{N},\cdot)$. And so, by Theorem 16.26.1, every central$^*$ set in $(\mathbb{N},\cdot)$
 is a central set in $(\mathbb{N},+)$, i.e. every multiplicative central$^*$ set is an additive central set, and so is a $J$-set near $\eta$.

Assume that
 \[
 \Delta=\left\{A\subseteq\beta\mathbb{N}:\forall A\in p, \limsup_{n\rightarrow\infty}\frac{\text{card}(A\cap\{1,\cdots,n\})}{n}>0\right\}.
 \]
 Then every combinatorial rich ultrafilters belong to $\eta^*$, where combinatorial rich ultrafilters is a multiplicative idempotent $p\in M\cap\Delta\cap K(\beta\mathbb{N},\cdot)$. By Theorem 17.1 in \cite{hindbook}, there is a combinatorial rich ultrafilter. For some properties of combinatorial rich ultrafilters see Theorem 17.3 in \cite{hindbook}. So it seems that $\eta^*$ has rich combinatorial properties.
 In this paper, we just concentrate on extension of concepts $J$-set and $C$-set near an idempotent in $wap$-compactification of a semitopological semigroup.

As a result of  the above points, we have two versions of the Central sets Theorem on $(0,+\infty)$,
Global Central sets Theorem and Local Central sets Theorem. The zero for the semigroup of $(0,+\infty)$ is a virtual
idempotent, but it can easily be used for other semigroup. It seems that, the idempotents in the
$wap$-compactification of a semitopological semigroup can act as a virtual idempotent.
Therefore, the question arises as to whether like the virtual idempotent of $0$, the Local Central sets Theorem and related
concepts can be defined for the virtual idempotent of a subsemigroup.

In what follows, we give and prove some locally combinatorial concepts near a virtual idempotent. For
this purpose, we consider the $wap$-compactification of a semitopological semigroup $S$.
When $S$ is a discrete semigroup, the $Lmc$-compactification and $\beta S$ are topologically isomorphism, and also if $S$
is a commutative semigroup then $wap$-compactification of $S$ is commutative, so we fucus on
$wap$-compactification of a semitopological semigroup $S$.

According to \cite{Akbarii}, the $Lmc$-compactification is expressed as a space of \break$e$-ultrafilters.
However, in similar ways, we can describe the $wap$-compact-\break ification of a semitopological semigroup as
a space of $e$-ultrafilters. For this purpose, in brief, for a semitoplogical semigroup of $S$ in Section 2, the $wap$-compactification of
$S$ is described as a space of $e$-ultrafilters, which in continues play an essential role. In Section 3, we state some combinatorial concepts
near an idempotent in $wap-$compactification of a commutative semigroup $S$ and the next section, we concentrate on noncommutative semigroup.


\section{\textbf{Preliminary}}

Let $(S,+)$ be a semitopological semigroup (i.e., right and left translations are continuous).
 The collection of all bounded complex valued continuous functions on $S$ with uniform norm is a $C^*$-algebra
 and is denoted by $\mathcal{C}\mathcal{B}(S)$. A weakly almost periodic function, $f$, is a member of
  $\mathcal{C}\mathcal{B}(S)$ such that $\{R_{s}f: s\in S\}$ is weakly
 relatively compact subset of $\mathcal{CB}(S)$. $wap(S)$ denotes the weakly almost periodic functions on $S$. $(\varepsilon, S^w)$ as the $wap$-compactification of a semitopological semigroup is the universal semitopological semigroup compactification of $S$, see \cite{Analyson}.

 In \cite{Akbarii}, $Lmc$-compactification has been characterized as a space
 of $e$-ultarfilters. We know that the $Lmc$-compactification of a discrete semigroup is same the Stone-$\check{C}$ech compactification. We restate some definitions and concepts from \cite{Akbarii}, because we need to describe $wap$-compactification as a space of $e$-ultrafilters.

 For every complex valued function $f$ on $S$, $Z(f)=\{s\in S:f(s)=0\}$ is called a zero set. Let $Z(wap(S))=\{Z(f):f\in wap(S)\}$.

We say that $\mathcal{A}\subseteq Z(wap(S))$ is a $z$-filter on $wap(S)$ ($z-$filter) if,

 \indent {$(i)$} $\emptyset\notin \mathcal{A}$,

 \indent {$(ii)$}  for every $A,B\in \mathcal{A}$, implies that $A\bigcap B\in\mathcal{A}$,

  \indent {$(iii)$} if $A\in \mathcal{A}$, $B\in Z(wap(S))$ and $A\subseteq
B$ then $B\in \mathcal{A}$.

For a complex valued function $f$ and $\epsilon >0$, define
$E_{\epsilon}(f)=\{s\in S:|f(s)|\leq\epsilon\}$.
For $I$ as a collection of complex valued functions on $S$, define $E(I)=\{E_{\epsilon}(f):f\in
I,\epsilon>0\}$. Also, for any
collection $\A\subseteq Z(wap(S)$, let
\[
E^{-}(\A)=\{f\in wap (S):E_{\epsilon}(f)\in\A \mbox{ for each }\epsilon >0\}.
\]
If $\A$ is a $z-$filter, we say that $\A$ is an $e-$filter if
$E(E^{-}(\A))=\A$. For every $z-$ultrafilter $\A$, $E(E^{-}(\A))$ is an $e-$ultrafilter.

For a Hausdorff semitopological semigroup $S$. By Theorem 3.8 in \cite{Akbarii}, it is proved that
$\{p: p \mbox{ is an }e-\mbox{ultrafilter.}\}$ and $S^w$ are topologically isomorphic. Therefore,
\[
\{A^{\dag}=\{p\in \mathcal{E}(S):A\in p\}:A\in Z(wap(S))\}
\]
forms a basis for a topology on $S^w$. Each $a\in S$, define $e(a)=\{\ E_{\epsilon}(f):f(a)=0, \epsilon>0\}$ is an $e$-ultrafilter.  Also, for
$A\in Z(wap(S))$ and $x\in S$, we have $A\in x+p$ if and only
if $\lambda_x^{-1}(A)\in p$, see Lemma 3.9 in \cite{Akbarii}.

Now, we ready define ultrafilters near an idempotent. For $\eta\in S^w$, we define
\[
\eta^*=\{u\in\beta S_d:\eta\in\bigcap_{A\in u}cl_{S^w} A\}.
\]
We know that if $p,q\in\beta S$ and $A\subseteq S$, then we have $A\in p+q$ if and only if
$\{x\in S:-x+A\in q\}\in p$, where $-x+A=\{y\in S:x+y\in A\}$. See \cite{hindbook}.

\begin{lem}\label{lem2.17}
Let $(S,+)$ be a semitopological semigroup.\\
$(a)$ For $\eta\in S^w$, $\eta^*=\{p\in\beta S_d:\eta\subseteq p\}$.\\
$(b)$ Let $A\subseteq S$. Then $x\in cl_{S^w}A$ if and only if $cl_{\beta S_d}A\cap x^*\neq \emptyset.$\\
$(c)$ For each $u,v\in S^w$, $u^* + v^*\subseteq (u+v)^*$.\\
$(d)$ Let $\eta\in E( S^*)$, then $\eta^*$ is a compact subsemigroup of $\beta S_d$.
\end{lem}
\begin{proof}
(a) It is obvious.

(b) Let $x\in cl_{S^w}A$, so for each $U\in x$ we will have $U\cap A\neq\emptyset$. Therefore $x\cup \{A\}$
has the finite intersection property, this implies that there exists an ultrafilter $p$ such that $x\cup \{A\}\subseteq p$.
So $cl_{\beta S_d}A\cap x^*\neq \emptyset$.

Conversely, let $p\in cl_{\beta S_d}A\cap x^*$. therefore $A\in p$ and $x\subseteq p$.
This implies that for each $U\in x$ we will have $U\cap A\neq\emptyset$. So $x\in cl_{S^w}A$.

(c) Let $p\in x^*$ and $q\in y^*$. We prove $p+q\in (x+y)^*$. In order to we show that
$x+y\subseteq p+q$. So let $U\in x+y$, then there are $\epsilon>0$ and $f$ in $wap(S)$ such that
$U=E_{\epsilon}(f)$ and $E_{\delta}(y,f)=\{t\in
S:\lambda^{-1}_{t}(E_{\delta}(f))\in y\}\in x$ for each $\delta>0$. Now let $\delta=\epsilon$, so
\[
E_{\epsilon}(y,f)=\{t\in
S:\lambda^{-1}_{t}(E_{\epsilon}(f))\in y\subseteq q\}\in x\subseteq p.
\]
This implies that $U=E_{\epsilon}(f)\in p+q$.

(d) It is obvious that $\eta^*$ is a subsemigroup of $\beta S_d$. Now let $p\in \beta S_d\setminus \eta^*$.
Pick $U\in \eta\setminus p$. Then $cl_{\beta S_d}(S\setminus U)$ is a neighborhood of $p$ which
\[
cl_{\beta S_d}(S\setminus U)\cap \eta^*=\emptyset.
\]
Therefore $\eta^*$ is closed subset of $\beta S_d$.
\end{proof}
\begin{lem}
Let $(S,+)$ be a semitopological semigroup.\\
a) For each $p\in \beta S_d$, there exists a unique $e_p\in S^{w}$ such that $e_p\subseteq p$.\\
b) $\pi:\beta S_d\rightarrow S^{w}$ by $\pi(p)=e_p$ is well defined.\\
c) $\pi:\beta S_d\rightarrow  S^{w}$ is continuous homomorphism. In particular, for $p,q\in\beta S_d$, $\pi(p)+\pi(q)\subseteq p+q$.
\end{lem}
\begin{proof}
It is obvious.
\end{proof}

\begin{defn}
Let $(S,+)$ be a semitopological semigroup and  $\eta\in E(S^*)$. A subset $ A $ of  $ S $ is piecewise syndetic near
$\eta$ if and only if $ K(\eta^*)\cap cl_{ \beta S}(A)  \neq \emptyset $.
\end{defn}

\section{\textbf{ $J$-sets and $C$-sets near an idempotent}}

Let $(S,+)$ be a commutative semitopological semigroup. We define concepts of $J$-set and
$C$-set near a virtual idempotent $\eta$ in $S^w$.

For $B\subseteq \mathbb{N}$, the upper density of $B$ is defined by
 \[
 \overline{d}(B)=\limsup_{n\rightarrow \infty}\dfrac{|B\cap \{1,2,\cdots ,n\}|}{n}.
 \]

Let $\eta\in S^w$ and $f$ be a sequence in $S$.
We say that $\overline{d}-\lim_{n\in \mathbb{N}}f(n)=\eta$ if
for every  $U\in \eta$, $\overline{d}(\{n: f(n)\notin U\})=0$. In this paper,
\[
\mathcal{T}_\eta=\{f:\mathbb{N}\rightarrow S:\overline{d}-\lim_{n\in \mathbb{N}}f(n)=\eta\}.
\]

 Let $f,g\in \mathcal{T}_{x}$, and set $A=\{n\in \mathbb{N}:f(n)\notin U\}$ and $B=\{n\in \mathbb{N}:g(n)\notin U\}$.
 Then $\overline{d}(A)=\overline{d}(B)=0$, therefore $\overline{d}(A\cup B)=0$. By Exercise 3.11(c) in \cite{density},
 $\overline{d}(A\cup B)=1-\underline{d}((A\cup B)^c)$ implies that $\overline{d}(A^c\cap B^c)=1$. So
 $\overline{d}(f^{-1}(U)\cap g^{-1}(U))=1$ for each $U\in x$.

\begin{defn}{\label{j1}}
Let $A\subseteq S$ and $x\in cl_{S^w}A$. $A\subseteq S$ is called $J-$set near $x$ if for every
 $F\in P_f(\mathcal{T}_x)$ and for each $U\in x$ there are $a\in U$ and
$H\in P_f(\mathbb{N})$ such that
\[
a+\sum_{t\in H}f(t)\in A
\]
for each $f\in F$.
\end{defn}
\begin{lem}
If $\eta\in E(S^*)$ and $W\in\eta$, then $W$ is a $J$-set near $\eta$.
\end{lem}
\begin{proof}
Pick $k\in \mathbb{N}$ and let $F=\{f_1,\cdots,f_l\}\in P_f(\mathcal{T}_\eta)$.
Since
\[
\overline{d}(\bigcap_{f\in F}f^{-1}(V))=1
\]
 for each $V\in\eta$, so
by definition of $\mathcal{T}_\eta$, $A=\bigcap_{g\in F}g(\bigcap_{f\in F}f^{-1}(V))$
is infinite. Now pick $p_1,\cdots, p_n\in cl_{\beta S_d}A\cap \eta^*$. By Theorem 1.6.13 in \cite{Engel}, so there exist nets
$$\{\gamma_{\alpha_1}\}, \cdots,\{\gamma_{\alpha_n}\}$$
in $\mathbb{N}$ and $\{a_U\}_{U\in \eta}$ such that $a_U\in U$ $a_U\rightarrow \eta$ and
$f_i(\gamma_{\alpha_j})\rightarrow \eta$ for each $i\in\{1,\cdots, l\}$ and $j\in\{1,\cdots, l\}$ in $S^w$. Then
\[
\big(a_U+\sum_{i=1}^k f_1(\gamma_{\alpha_i}),\cdots,a_U+\sum_{i=1}^k f_l(\gamma_{\alpha_i})\big)\rightarrow \overline{\eta}=(\eta,\cdots,\eta)
\]
in $\times_{i=1}^lS^w$.
Now for $W\in\eta$, $\times_{i=1}^lW\in\times_{i=1}^l\eta$. So there exist
$H\in P_f(\mathbb{N})$ and $a_U\in U\subseteq W$ such that
\[
\big(a_U+\sum_{t\in H} f_1(t),\cdots,a_U+\sum_{t\in H} f_l(t)\big)\in \times_{i=1}^lW.
\]
This implies that $W$ is a $J$-set.
\end{proof}
\begin{lem}\label{lemma1}
Let $A$ be a subset of commutative semigroup $S$, and $\eta\in E(S^*)$. Then for every $m\in\mathbb{N}$, every
$F\in P_f(\mathcal{T}_\eta)$, and for each $U\in \eta$  there are $a\in U$ and
$H\in P_f(\mathbb{N})$ such that $\min H>m$ and  $a+\sum_{t\in H}f(t)\in A$ for each $f\in F$.
\end{lem}
\begin{proof}
See Lemma 14.8.2 in \cite {hindbook}.
\end{proof}
\begin{theorem}\label{theorem1}
For commutative semitopological semigroup $S$, let $A\subseteq S$.
Then every piecewise syndetic set near $\eta$ is a $J$-set near $\eta$.
\end{theorem}
\begin{proof}
Let $A\subseteq S$ be a piecewise syndetic set near $\eta$, and let $F=\{f_1,\cdots,f_l\}\in P_f(\mathcal{T}_\eta)$.
Let $Y=\times_{t=1}^{l}\eta^*\subseteq \times_{t=1}^{l}\beta S_d$.
So $Y$ is a compact right topological semigroup and for every $\vec{s}\in\times_{t=1}^{l}S$,
 $\lambda_{\vec{s}}$ is continuous, see  Theorem 2.22 in \cite{hindbook}. Pick $i\in\mathbb{N}$ and $U\in \eta$. Define

\begin{align*}
 I_{i,U}&=\\
 \big\{\big(a+\sum_{t\in H}f_1(t),\cdots,a+\sum_{t\in H}f_l(t)\big):a\in  U,
   H\in & P_f(\mathbb{N}), \mbox{ and }\min H>i\big\}\bigcap\times_{t=1}^{l}U
\end{align*}
and let $E_{i,U}=I_{i,U}\cup\{(a,\cdots,a):a\in  U\}$.

Let $E=\bigcap_{i\in\mathbb{N},U\in e}\overline{E_{i,U}}$ and let $I=\bigcap_{i\in\mathbb{N},U\in e}\overline{I_{i,U}}$.
Since $I_{i,U}\subseteq E_{i,U}\subseteq\times_{t=1}^{l}U$ for each $U\in \eta$,
$E\subseteq Y$ and $I\subseteq E$.

Now pick $p,q\in E$. We prove
$p+q\in E$, (i.e. $E$ is subsemigroup). Also, $p+q\in I$ if $p\in I$ or $q\in I$, (i.e. $I$ is two-side ideal). For $U\in \eta$,
so $p+q$ is interior point of $W=cl_{\beta S_d}  U$. Pick $i\in\mathbb{N}$, since right translation $\rho_q$ is continuous, there exists
open set $V$ such that $p\in V$ and
$V+q\subseteq W$. Now choose $\vec{x}\in E_{i,U}\cap V$ with $\vec{x}\in I_{i,U}$ if $p\in I$. Then for some $a\in  U$ and $H\in P_f(\mathbb{N})$ with $\min H>i$, $\vec{x}=(a+\sum_{t\in H}f_1(t),\cdots,a+\sum_{t\in H}f_l(t))$, whenever $x\in I_{i,U}$. Now pick $j=\max H$. Otherwise, let $j=i$. So choose an open set $Q$ such that $q\in Q$ and $\vec{x}+Q\subseteq W$, because left translatio $\lambda_{\vec{x}}$ is continuous. Pick $\vec{y}\in E_{j,U}\cap W$ with $\vec{y}\in I_{j,U}$ if $q\in I$.
Then $\vec{x}+\vec{y}\in E_{i,U}\cap W$ and if either $p\in I$ or $q\in I$, then $\vec{x}+\vec{y}\in I_{i,U}\cap W$.

Since $K(Y)=\times_{t=1}^{l}K(\eta^*)$, see Theorem 2.23 in \cite {hindbook}. Choose $p\in K(\eta^*)\cap \overline{A}$.
So $\overline{p}=(p,\cdots,p)\in K(Y)$. We show that $\overline{p}\in E$. Therefore, let $Z$ be an open neighborhood of
$\overline{p}$, let $i\in\mathbb{N}$, and choose $C_1,\cdots,C_l\in p$ such that $\times_{t=1}^{l}\overline{C_t}\subseteq Z$. For
$a\in\bigcap_{t=1}^lC_t$. Then $\overline{a}=(a,\cdots,a)\in W\cap E_{i,U}$. Thus
$\overline{p}\in K(Y)\cap E$ and consequently $K(Y)\cap E\neq\emptyset$. So we have that
$K(E)=K(Y)\cap E$ and so $\overline{p}\in K(E)\subseteq I$. Therefore $I_{1,U}\cap \times_{t=1}^{l}\overline{A}\neq\emptyset$ for each
$U\in \eta$, so choose $\vec{z}\in I_{1,U}\cap\times_{t=1}^{l} \overline{A}$ and $a\in  U$ and $H\in P_f(\mathbb{N})$ such that
\[
\vec{z}=\big(a+\sum_{t\in H}f_1(t),\cdots,a+\sum_{t\in H}f_l(t)\big).
\]
\end{proof}

\begin{theorem}\label{th1}
Let $S$ be a commutative semitopological semigroup and $\eta$ be a idempotent in $S^*$. Let $A$ be a central subset of $S$ near $\eta$. Then for $U\in \eta$,
there exist functions $\alpha_U:P_f(\mathcal{T}_\eta)\rightarrow S$ and $H_U:P_f(\mathcal{T}_\eta)\rightarrow P_f(\mathbb{N})$ such that

$(1)$ $\alpha_{U}(F)\in U$ for each $F\in P_f(\mathcal{T}_\eta)$,

$(2)$ for $F,G\in P_f(\mathcal{T}_\eta)$ and $F\subsetneq G$, implies that $\max H_U(F)<\min H_U(G)$, and

$(3)$ whenever $m\in \mathbb{N}$, $\{X_i\}_{i=1}^m\subseteq  P_f(\mathcal{T}_\eta)$, $X_1\subsetneq X_2\subset\cdots\subsetneq X_m$, and
for every $i\in \{1,2,\cdots , m\}$, $f_i\in X_i$, we have
\[
\sum_{i=1}^m\big(\alpha_U(X_i)+\sum_{t\in H_U(X_i)}f_i(t)\big)\in A.
\]
\end{theorem}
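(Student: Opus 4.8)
The plan is to imitate the modern proof of the Central Sets Theorem (Theorem 14.8.4 in \cite{hindbook}), transporting each ingredient to the ``near $e$'' setting. Since $A$ is central near $e$, fix an idempotent $p\in K$ with $A\in p$. The engine of the argument is the observation that every member of $p$ is a $J$-set near $e$: if $B\in p$ then $p\in K\cap cl_{\beta S_d}(B)$, so $B$ is piecewise syndetic near $e$ by the characterization of piecewise syndetic near $e$ stated at the end of Section 2, and hence $B$ is a $J$-set near $e$ by Theorem \ref{theorem1}. Next I would set $A^\star=\{x\in A:-x+A\in p\}$; because $e^*$ is a subsemigroup of $\beta S_d$ (Lemma \ref{lem2.17}(h)) the addition on $p$ is the usual one, so the standard idempotent lemma (Lemma 4.14 in \cite{hindbook}) gives $A^\star\in p$ and $-x+A^\star\in p$ for every $x\in A^\star$.

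Fix $U\in e$ once and for all. I would then define $\alpha_U(F)$ and $H_U(F)$ by induction on $|F|$, carrying the stronger inductive invariant that every chain-sum $\sum_{i=1}^m(\alpha_U(G_i)+\sum_{t\in H_U(G_i)}f_i(t))$ (with $G_1\subset\cdots\subset G_m=F$ and $f_i\in G_i$) lies in $A^\star$, not merely in $A$. For the base case $|F|=1$, $A^\star$ is a $J$-set near $e$, so the $J$-set property applied to $F$ and $U$ yields $a\in U$ and $H\in P_f(\mathbb{N})$ with $a+\sum_{t\in H}f(t)\in A^\star$ for each $f\in F$; set $\alpha_U(F)=a$ and $H_U(F)=H$. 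For the inductive step with $|F|=n>1$, let $M$ be the (finite) set of all chain-sums coming from chains $G_1\subset\cdots\subset G_m$ with $G_m\subset F$ and $f_i\in G_i$; by the invariant $M\subseteq A^\star$. Put $B=A^\star\cap\bigcap_{y\in M}(-y+A^\star)$. This is a finite intersection of members of $p$, so $B\in p$, whence $B$ is a $J$-set near $e$. Letting $k=\max\{\max H_U(G):G\subset F,\ G\in P_f(\mathcal{T}_e)\}$ and applying Lemma \ref{lemma1} to $B$, $F$ and $U$, I obtain $a\in U$ and $H\in P_f(\mathbb{N})$ with $\min H>k$ and $a+\sum_{t\in H}f(t)\in B$ for every $f\in F$; set $\alpha_U(F)=a$ and $H_U(F)=H$.

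It then remains to verify the three conclusions. Condition $(1)$ is immediate since $a\in U$ at every stage. Condition $(2)$ holds because, when $F\subset G$, the choice $\min H_U(G)>k\geq\max H_U(F)$ was built in. For condition $(3)$, given a chain $G_1\subset\cdots\subset G_m=F$ with $f_i\in G_i$: if $m=1$ the sum equals $a+\sum_{t\in H_U(F)}f_1(t)\in B\subseteq A^\star\subseteq A$; if $m>1$, write $y=\sum_{i=1}^{m-1}(\alpha_U(G_i)+\sum_{t\in H_U(G_i)}f_i(t))$, which lies in $M\subseteq A^\star$ by the invariant, and note $a+\sum_{t\in H_U(F)}f_m(t)\in B\subseteq -y+A^\star$, so the total sum lies in $A^\star\subseteq A$, which simultaneously re-establishes the invariant. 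I expect the main obstacle to be the first paragraph: one must be sure that the ``near $e$'' notions of piecewise syndetic and $J$-set interact exactly as in the discrete case, so that membership in the minimal idempotent $p$ forces the $J$-set property, and that the idempotent lemma for $A^\star$ transfers verbatim because $e^*$ sits inside $\beta S_d$ as a subsemigroup; the remaining induction is then a bookkeeping argument parallel to the classical one.
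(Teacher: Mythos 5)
Your proposal follows essentially the same route as the paper's own proof: fix a minimal idempotent $p\in K$ with $A\in p$, pass to $A^\star=\{x\in A:-x+A\in p\}$ via Lemma 4.14 of \cite{hindbook}, and run the induction on $|F|$ using the fact that members of $p$ are piecewise syndetic near $e$, hence $J$-sets near $e$ by Theorem \ref{theorem1}, together with Lemma \ref{lemma1} to push $\min H$ past the previously used indices. The only difference is that you spell out explicitly why every $B\in p$ is a $J$-set near $e$ (via $p\in K\cap cl_{\beta S_d}(B)$), a step the paper leaves implicit; the argument is correct as written.
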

\begin{proof}
Choose a minimal idempotent $p$ of $\eta^*$ such that $A\in p$. Let $A^*=\{x\in A:-x+A\in p\}$, so $A^*\in p$.
Therefore, $x\in A^*$, implies that $-x+A^*\in p$, see Lemma 4.14 in \cite {hindbook}.

We define $\alpha_U(F)\in S$ and $H_U(F)\in P_f(\mathbb{N})$ for $F\in P_f(\mathcal{T}_\eta)$ and $U\in \eta$. By induction on $|F|$ satisfying the following statements:

(1) $\alpha_U(G)\in U$ for each $G\in \eta$,

(2) if $F,G\in P_f(\mathcal{T}_\eta)$ and $F\subsetneq G$, then $\max H_U(F)<\min H_U(G)$, and

(3) whenever $m\in \mathbb{N}$, $\{X_i\}_{i=1}^m\subseteq  P_f(\mathcal{T}_\eta)$, $X_1\subsetneq X_2\subset\cdots \subsetneq X_m$, and
for each $i\in \{1,2,\cdots , m\}$, $f_i\in X_i$, one has
\[
\sum_{i=1}^m\big(\alpha_U(X_i)+\sum_{t\in H_U(X_i)}f_i(t)\big)\in A^*.
\]

Assume that $F=\{f\}$. As $A^*$ is piecewise syndetic near $\eta$, choose for $U\in \eta$, $a\in S\cap U$ and $L\in P_f(\mathbb{N})$ such that
$a+\sum_{t\in L}f(t)\in A^*$. Define $\alpha_U(\{f\})=a$ and
$H_U(\{f\})=L$.

Assume that $|F|>1$, for every proper subsets $G$ of $F$ and for each $U\in \eta$,  $\alpha_U(G)$ and $H_U(G)$ have been defined.
For $U\in\eta$, define $K_U=\bigcup\{H_U(G):\emptyset\neq G\subsetneq F\}$ and let $m=\max K_U$. Define
\begin{align*}
M_U=\big\{\sum_{i=1}^n\big(\alpha_U(X_i)+&\sum_{t\in H_U(X_i)}f_i(t)\big):n\in\mathbb{N},\\
&\emptyset\neq X_1\subsetneq\cdots \subsetneq X_n\subsetneq F,
\,and\,\,\,\,\{f_i\}_{i=1}^n\in\times_{i=1}^n X_i\big\}.
\end{align*}
Therefore $M_U$ is finite set and by (3), $M_U\subseteq A^*$. Assume that
$B=A^*\cap\bigcap_{x\in M_U}(-x+A^*)$, so $B\in p$ and so pick $a\in S\cap U$ and $L\in
P_f(\mathbb{N})$ such that $a+\sum_{t\in L}f(t)\in B$ for each $f\in F$.
Let $\alpha_U(F)=a$ and $H_U(F)=L$.

The hypothesis (1) is clear. Since $\min L>m$, the hypothesis (2) is satisfied. To verify (3), choose $U\in \eta$ and
$n\in \mathbb{N}$, let $\emptyset\subsetneq X_1\subset\cdots \subsetneq X_n=F$, and $\{f_i\}_{i=1}^n\in\times_{i=1}^{n} X_i$.
If $n=1$, then $\alpha_U(X_1)+\sum_{t\in H_U(X_1)}f_1(t)=a+\sum_{t\in L}f_1(t)\in B\subseteq A^*$. So let $n>1$ and
define $y=\sum_{i=1}^{n-1}\big(\alpha_U(X_i)+\sum_{t\in H_U(X_i)}f_i(t)\big)$. Therefore
$y\in M_U$ so $a+\sum_{t\in L}f_1(t)\in B\subseteq(-y+A^*)$ and thus
$\sum_{i=1}^n\big(\alpha_U(X_i)+\sum_{t\in H_U(X_i)}f_i(t)\big)=y+a+\sum_{t\in L}f_1(t)\in A^*$ as required.
\end{proof}
\begin{defn}\label{defn1}
For a commutative semitopological semigroup $S$ let $\eta\in E(S^*)$. $A\subseteq S$ is called $C$-set near $\eta$ if
for $U\in \eta$,
there are functions $\alpha_U:P_f(\mathcal{T}_\eta)\rightarrow S$ and $H_U:P_f(\mathcal{T}_\eta)\rightarrow P_f(\mathbb{N})$ such that

(1) $\alpha_U(F)\in U$, for each $F\in P_f(\mathcal{T}_\eta)$,

(2) if $F,G\in P_f(\mathcal{T}_\eta)$ and $F\subsetneq G$, then $\max H_U(F)<\min H_U(G)$ and

(3) whenever $m\in \mathbb{N}$, $\{X_i\}_{i=1}^m\subseteq  P_f(\mathcal{T}_\eta)$, $X_1\subsetneq X_2\subset\cdots \subsetneq X_m$, and
for each $i\in \{1,2,\cdots , m\}$, $f_i\in X_i$, one has
\[
\sum_{i=1}^m\big(\alpha_U(X_i)+\sum_{t\in H_U(X_i)}f_i(t)\big)\in A.
\]
\end{defn}

\section{\textbf{Noncommutative version }}

In this section, we assume that $(S,+)$ is noncommutative semigroup. We state the concepts of $J$-set and $C$-set near an idempotent. Define
\[
\Phi=\{f\in ^\mathbb{N}\mathbb{N}:\forall n\in\mathbb{N}\,\,f(n)< n\}.
\]

\begin{defn}
For an arbitrary semitopological semigroup $S$ and for $m\in \mathbb{N}$, we define
\begin{multline*}
  \V_m=\{\times_{i=1}^mH_i\in P_f(\mathbb{N})^m:\mbox{ if }m>1,\, 1\leq t\leq m-1,\\
  \mbox{ then }\max H_t<\min H_{t+1} \}, \\
 \end{multline*}
\[
\mathcal{J}_m=\{\times_{i=1}^mt(i)\in\mathbb{N}^m:t(1)<\cdots<t(m)\},
\]
and
\[
S^m_U=S^m\cap U^m
\]
for $U\in \eta$.
\end{defn}
\begin{defn}
For an arbitrary semitopological semigroup $S$ and for $\eta\in E(S^*)$.

 Given $m\in \mathbb{N}$, $U\in \eta$, $a\in S_U^{m+1}$, $t\in\mathcal{J}_m$, and $f\in \mathcal{T}_\eta$, define
\[
x(m,a,t,f)=\sum_{j=1}^m\big(a(j)+f(t(j))\big)+a(m+1).
\]
\end{defn}
\begin{defn}{\label{j2}}
For an arbitrary semitopological semigroup $S$, $\eta\in E(S^*)$, $A\subseteq S$, and $\eta\in cl_{S^w}A$.

(a) $A$ is called $J$-set near $\eta$ if for each $F\in P_f(\mathcal{T}_\eta)$ and for each $U\in \eta$ there
exist $m\in \mathbb{N}$, $a\in S_U^{m+1}$, and $t\in \mathcal{J}_m$ such that for each $f\in F$, $x(m,a,t,f)\in A$.

b) $J_\eta(S)=\{p\in \eta^*:\forall A\in p, A\mbox{ is a }J-\mbox{set near }\eta\}.$
\end{defn}

\begin{lem}\label{lemma5.10}
For a commutative semitopological semigroup $S$, $\eta\in E(S^*)$, $A\subseteq S$, and $\eta\in cl_{S_w}A$. Then the follwing statemets equivalent:

(a) $A$ is a $J$-set near $\eta$, by Definition \ref{j1}.

(b) $A$ is a $J$-set near $\eta$, by Definition \ref{j2}.
\end{lem}
\begin{proof}
It is obvious that (b) implies (a).\\
If (a) is true. Pick $F\in P_f(\mathcal{T}_\eta)$, $U\in \eta$, $c\in \mathbb{N}$ and for
$f\in F$, define $g_f\in \mathcal{T}_\eta$ by $g_f(n)=f(n+c)$. Pick $b\in S$ and
$H\in P_f(\mathbb{N})$ such that for each $f\in F$, $b+\sum_{t\in H}g_f(t)\in
A$. Let $m=|H|$, and let $t=\big(t(1),\cdots,t(m)\big)$ enumerate $H$ in increasing order. Define $a(1)=b$
and for $j\in\{2,\cdots,m+1\}$, define $a(j)=c$. This complete the proof.
\end{proof}
\begin{defn}{\label{c2}}
For an arbitrary semitopological semigroup $S$, $\eta\in E(S^*)$, $A\subseteq S$, and $\eta\in cl_{S^w}A$. We say
$A$ is $C$-set near $\eta$ if  for each $U\in \eta$, there are $m_U:P_f(\mathcal{T}_\eta)\rightarrow \mathbb{N}$,
$\alpha_U\in\times_{F\in P_f(\mathcal{T}_\eta)}S_U^{m_U(F)+1}$,
and $\tau_U\in\times_{F\in P_f(\mathcal{T}_\eta)}\mathcal{J}_{m_U(F)}$ such that

(1) if $F,G\in P_f(\mathcal{T}_\eta)$ and $F\subsetneq G$ then $\tau_U(F)(m_U(F))<\tau_U(G)(1)$ for each $U\in \eta$, and

(2) whenever $n\in \mathbb{N}$, $X_1,\cdots, X_n\in P_f(\mathcal{T}_\eta)$,
$X_1\subsetneq X_2\subset\cdots \subsetneq X_n$, and for each
$i\in\{1,\cdots, n\}$, $f_i\in X_i$, one has
\[
\sum_{i=1}^nx(m_U(X_i),\alpha_U(X_i),\tau_U(X_i),f_i)\in A.
\]
\end{defn}
\begin{lem}\label{lemma5.11}
Let $S$ be a commutative semitopological semigroup and $\eta\in E(S^*)$,
let $A\subseteq S$, and $\eta\in cl_{S^w}A$. Then Definitions \ref{defn1} and \ref{c2} are equivalent.
\end{lem}
\begin{proof}
It is obvious that Definition \ref{c2} implies Definition \ref{defn1}. \\
Now let Definition \ref{defn1} be true. Pick $\alpha_U:P_f(\mathcal{T}_\eta)\rightarrow S$ and
$H_U:P_f(\mathcal{T}_\eta)\rightarrow P_f(\mathbb{N})$ for each $U\in \eta$
as guaranteed by $(1)$ and $(2)$. Now pick $c\in \mathbb{N}$ and for $f\in \mathcal{T}_\eta$ define $g_f\in\mathcal{T}_\eta$ by
$g_f(s)=f(s+ c)$, for $s\in \mathbb{N}$. For $F\in P_f(\mathcal{T}_\eta)$, we define
inductively on $|F|$ a set $K(F)\in P_f(\mathcal{T}_\eta)$ such that

$(1)$ $\{g_f:f\in F\}\subseteq K(F)$ and

$(2)$ if $\emptyset\neq G\subsetneq F$, then $K(G)\subsetneq K(F)$.

If $F=\{f\}$, let $K(F)=\{g_f\}$. Now let $|F|>1$ and $K(G)$ has been defined for all proper nonempty
subsets of $F$. Pick
 \[
 h\in \mathcal{T}_\eta\setminus\bigcup\{K(G):\emptyset\neq G\subsetneq F\}
 \]
  and
let
\[K(F)=\{h\}\cup\{g_f:f\in F\}\cup\bigcup\{K(G):\emptyset\neq G\subsetneq F\}\]
Now for each $U\in \eta$, we define
  $m_U:P_f(\mathcal{T}_\eta)\rightarrow \mathbb{N}$, $\alpha_{U}^\prime\in\times_{F\in P_f(\mathcal{T}_\eta)}S^{m_U(F)+1}$,
and $\tau_U\in\times_{F\in P_f(\mathcal{T}_\eta)}\mathcal{J}_{m_U(F)}$.
Let $F\in P_f(\mathcal{T}_\eta)$ be given and let $m_U(F)=|H_U(K(F))|$. Define
$\alpha_{U}^\prime (F)\in S^{m_U(F)+1}$, for $j\in\{1,2,\cdots,
m_U(F)+1\}$, $\alpha_{U}^\prime (F)(j)=\alpha_U(K(F))$ if $j=1$ and
$\alpha_{U}^\prime (F)(j)=c$ if $j>1$. Let $\tau_U(F)=(\tau_U(F)(1),\cdots,
\tau(F)(m_U(F)))$ enumerate $H_U(K(F))$ in increasing order. We need to show that

(a) if $F, G\in P_{f}(\mathcal{T}_\eta)$ and $F\subsetneq G$, then $\tau_U(F)(m_U(F))<\tau_U(G)(1)$ for each $U\in \eta$, and

(b) whenever $n\in \mathbb{N}$, $X_1,\cdots, X_n\in P_f(\mathcal{T}_\eta)$,
$X_1\subsetneq X_2\subset\cdots \subsetneq X_n$, and for each
$i\in\{1,\cdots, n\}$, $f_i\in X_i$, one has
\[
\sum_{i=1}^nx(m_U(X_i),\alpha_{U}^\prime(X_i),\tau_U(X_i),f_i)\in A.
\]
 To verify (a), let $F, G\in P_{f}(\mathcal{T}_\eta)$ with
$F\subsetneq G$, then $K(F)\subsetneq K(F)$, and so
 \[
 \tau_U(F)(m_U(F))=\max H_U(K(F))<\min H_U(K(G))=\tau_U(G)(1).
 \]
To verify (b), let $n\in \mathbb{N}$, $X_1,\cdots, X_n\in P_f(\mathcal{T}_\eta)$,
$X_1\subsetneq X_2\subset\cdots \subsetneq X_n$, and for each
$i\in\{1,\cdots, n\}$, let $f_i\in X_i$. Then $K(X_1)\subset
K(X_2)\subset\cdots \subsetneq K(X_n)$, and for each $f_i\in X_i$,
$g_{f_{i}} \in K(X_{i})$ so
$\sum^{n}_{i=1}(\alpha_U(K(X_{i})))+\sum_{t\in H_U(K(X_{i}))}g_{f_{i}}(t))\in A$ and\\
\begin{align*}
\sum^{n}_{i=1} & (\alpha_U(K(X_{i}))+ \sum_{t \in H_U(K(X_{i}))}
g_{f_{i}}(t))\\
&=\sum^{n}_{i=1}(\alpha_U(K(X_{i}))+\sum^{m_U(X_{i})}_{j=1}(f_{i}(\tau_U(X_{i})(j))+c))\\
&=\sum^{n}_{i=1}(\sum^{m_U(X_{i})}_{j=1}(\alpha_{U}^\prime(X_{i})(j)+(f_{i}(\tau_U(X_{i})(j)))+\alpha_{U}^\prime(X_{i})(m_U(X_{i})+1))\\
& =\sum^{n}_{i=1}x(m_U(X_i),\alpha_{U}^\prime(X_i),\tau_U(X_i),f_i).
\end{align*}
\end{proof}
\begin{lem}
Let $S$ be a commutative semitopological semigroup, $\eta\in E(S^*)$, $A\subseteq S$, and $\eta\in cl_{S^w}A$. Let $A$ be a $J$-set near $\eta$ in $S$, then for each
$F\in P_{f}(\mathcal{T}_\eta)$, each $U\in \eta$ and each $n\in\mathbb{N}$, there exist $m\in
\mathbb{N}$, $a\in S_U^{m+1}$, and $y\in \mathcal{J}_m $ such that $y(1)>n$ and for each $f\in F$, $x(m,a,y,f)\in A$.
\end{lem}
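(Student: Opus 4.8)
The plan is to reduce the statement to the defining property of a $J$-set near $e$ by shifting the index variable upward by $n$. Fix $F\in P_f(\mathcal{T}_e)$, $U\in e$, and $n\in\mathbb{N}$. For each $f\in F$ I would introduce the shifted function $g_f:\mathbb{N}\rightarrow S$ given by $g_f(k)=f(n+k)$, and put $G=\{g_f:f\in F\}$. The one point requiring a genuine argument is that $g_f\in\mathcal{T}_e$, so that $G\in P_f(\mathcal{T}_e)$. For this, set $B=\{j\in\mathbb{N}:f(j)\in U'\}$ for a given $U'\in e$; then $\{k\in\mathbb{N}:g_f(k)\in U'\}=(B-n)\cap\mathbb{N}$, and since $|B\cap\{n+1,\dots,N+n\}|$ and $|B\cap\{1,\dots,N\}|$ differ by at most $2n$, dividing by $N$ and passing to the $\limsup$ yields $\overline{d}((B-n)\cap\mathbb{N})=\overline{d}(B)>0$. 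As $f\in\mathcal{T}_e$ this holds for every $U'\in e$, so $\overline{d}-\lim_{k\in\mathbb{N}}g_f(k)=e$ and $g_f\in\mathcal{T}_e$.

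With $G$ in hand, I would invoke the hypothesis that $A$ is a $J$-set near $e$, applied to the finite family $G\in P_f(\mathcal{T}_e)$ and to $U$: this yields $m\in\mathbb{N}$, $a\in S_U^{m+1}$, and $s\in\mathcal{J}_m$ such that $x(m,a,s,g)\in A$ for every $g\in G$; in particular $x(m,a,s,g_f)\in A$ for each $f\in F$.

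It remains to transport this conclusion from the $g_f$ back to the original $f$. Define $y\in\mathbb{N}^m$ by $y(j)=s(j)+n$. Since $s(1)<\cdots<s(m)$ forces $y(1)<\cdots<y(m)$, we have $y\in\mathcal{J}_m$, and moreover $y(1)=s(1)+n\geq n+1>n$, which supplies the extra conclusion. Retaining the same $a$ and $m$ and using $f(y(j))=f(s(j)+n)=g_f(s(j))$, the two evaluations coincide:
\[
x(m,a,y,f)=a(m+1)+\sum_{j=1}^m\big(a(j)+f(y(j))\big)=a(m+1)+\sum_{j=1}^m\big(a(j)+g_f(s(j))\big)=x(m,a,s,g_f)\in A
\]
for every $f\in F$, as required.

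I expect the only non-formal step to be the density computation establishing $g_f\in\mathcal{T}_e$; it is elementary, being the analogue here of the shift used in Lemma \ref{lemma1}. It is worth noting why I apply the $J$-set property directly in its $x(m,a,y,f)$-form to the shifted family $G$, rather than first passing to the $\sum_{t\in H}$-form and invoking Lemma \ref{lemma1}: the latter would only produce a single element $a\in U$, whereas the conclusion demands an $(m+1)$-tuple $a\in S_U^{m+1}$ with every coordinate in $U$, and one cannot in general split a given element of $U$ into such a tuple. Working with the $x$-form from the outset sidesteps this, since that form already delivers the full tuple in $S_U^{m+1}$.
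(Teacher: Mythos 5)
Your proposal is correct and follows essentially the same route as the paper: shift each $f$ to $g_f(k)=f(n+k)$, apply the $J$-set property in its $x(m,a,t,\cdot)$-form to the shifted family, and translate the index tuple back by $n$ to get $y(1)>n$. The only difference is that you explicitly verify $\overline{d}$-invariance under the shift to show $g_f\in\mathcal{T}_e$, a step the paper takes for granted.
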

\begin{proof}
Pick $F\in P_f(\mathcal{T}_\eta)$, $U\in e$ and $n\in \mathbb{N}$. For each $f\in F$ define $X_f\in\mathcal{T}_\eta$  for
$u\in \mathbb{N}$, by $X_f(u)=f(u+n)$. Pick $m\in \mathbb{N}$, $a\in S_U^{m+1}$ and $t\in\mathcal{J}_m$ such that
for each $f\in F$, $x(m,a,t,X_f)\in A$. Define $y\in\mathcal{J}_m$ by $y(i)=n+t(i)$ for $i\in\{1,2,\cdots,m\}$.
Then $y(1)>1$ and for each $f\in F$, $x(m,a,y,f)\in A$.
\end{proof}
\begin{lem}\label{lemma2}
For an arbitrary semitopological semigroup $S$, $\eta\in E(S^*)$, $A\subseteq S$, and $\eta\in cl_{S^w}A$. Pick $U\in \eta$, and let
$m,r\in \mathbb{N}$, let $a\in S_U^{m+1}$,
let $t\in \mathcal{J}_{m}$, and for each $y\in \mathbb{N}$, let $c_{y}\in
S_U^{r+1}$ and $z_{y}\in \mathcal{J}_{r}$ be a such that for each $y\in
\mathbb{N}$, $z_{y}(r)< z_{y+1}(1)$. Then there exist
$u\in \mathbb{N}$, $d\in S_U^{u+1}$, and $q\in \mathcal{J}_{u}$ such that for
each $f\in \mathcal{T}_\eta$,
\[
(\sum_{j=1}^{m}a(j)+ x(r,c_{t}(j),z_{t}(j),f))+a(m+1)=x(u,d,q,f).
\]
\end{lem}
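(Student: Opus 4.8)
The plan is to prove this by a direct bookkeeping argument: the expression $x(u,d,q,f)$ is a single product that alternates constant letters $d(l)$ with values $f(q(l))$ and finishes with $d(u+1)$, whereas the left-hand side is a nested product of the same shape, so it suffices to flatten the nesting and read off $u$, $q$, and $d$. I read the definition as $x(m,a,t,f)=\big(\sum_{j=1}^m(a(j)+f(t(j)))\big)+a(m+1)$, with the terminal coordinate adjoined last, since this is the reading forced by the shape of the claimed identity. I would then set $u=mr$ and take $q\in\mathbb{N}^u$ to be the concatenation
\[
q=\big(z_{t(1)}(1),\dots,z_{t(1)}(r),\,z_{t(2)}(1),\dots,z_{t(2)}(r),\,\dots,\,z_{t(m)}(1),\dots,z_{t(m)}(r)\big),
\]
that is, $q\big((j-1)r+k\big)=z_{t(j)}(k)$ for $1\le j\le m$ and $1\le k\le r$; this is the assignment that makes the $f$-values on the two sides agree term by term.

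The first genuine verification is that $q\in\mathcal{J}_u$, i.e. that $q$ is strictly increasing. Within a single block $j$ this is immediate from $z_{t(j)}\in\mathcal{J}_r$. Across consecutive blocks I must show $z_{t(j)}(r)<z_{t(j+1)}(1)$. Since $t\in\mathcal{J}_m$ gives $t(j)<t(j+1)$, I would first upgrade the hypothesis $z_y(r)<z_{y+1}(1)$ (valid for every $y$) to $z_y(r)<z_{y'}(1)$ whenever $y<y'$, via the telescoping chain $z_y(r)<z_{y+1}(1)\le z_{y+1}(r)<z_{y+2}(1)\le\cdots<z_{y'}(1)$; applying this with $y=t(j)$ and $y'=t(j+1)$ yields exactly $z_{t(j)}(r)<z_{t(j+1)}(1)$. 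Hence $q\in\mathcal{J}_u$.

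Next I would expand both sides in order and define $d$ by grouping the constants that fall between consecutive $f$-values. Substituting $x(r,c_{t(j)},z_{t(j)},f)=\big(\sum_{k=1}^r(c_{t(j)}(k)+f(z_{t(j)}(k)))\big)+c_{t(j)}(r+1)$, the left-hand side becomes, read from left to right, an alternating expression whose $f$-entries are precisely $f(q(1)),\dots,f(q(u))$. Reading off the ordered sums of constants sitting before $f(q(1))$, between $f(q(l))$ and $f(q(l+1))$, and after $f(q(u))$ forces
\[
d(1)=a(1)+c_{t(1)}(1),\qquad d\big((j-1)r+k\big)=c_{t(j)}(k)\ (2\le k\le r),
\]
\[
d(jr+1)=c_{t(j)}(r+1)+a(j+1)+c_{t(j+1)}(1)\ (1\le j\le m-1),\qquad d(u+1)=c_{t(m)}(r+1)+a(m+1).
\]
With $d$ and $q$ so defined, the identity $\big(\sum_{j=1}^m(a(j)+x(r,c_{t(j)},z_{t(j)},f))\big)+a(m+1)=x(u,d,q,f)$ holds for every $f\in\mathcal{T}_e$; because each $d(l)$ is assigned as the ordered product of the constants in the corresponding run, the argument never reorders factors and so remains valid in the noncommutative setting of this section.

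The step I expect to be the main obstacle is verifying that $d\in S_U^{u+1}$, i.e. that every coordinate of $d$ lies in $S\cap U$. The interior coordinates $d\big((j-1)r+k\big)=c_{t(j)}(k)$ with $2\le k\le r$ are single coordinates of the given $c_{t(j)}\in S_U^{r+1}$ and so lie in $U$ automatically. The three exceptional families $d(1)$, $d(jr+1)$, and $d(u+1)$, however, are ordered sums of two or three coordinates of $a$ and of the $c_y$, each individually in $U$; concluding that these sums again lie in $U$ is exactly the point at which the near-$e$ structure must be invoked, and I would handle it by drawing $U$ from a neighbourhood base of the idempotent $e$ closed under the operation, so that $U+U\subseteq U$. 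This is the only place where anything beyond pure algebra is needed; granting it one gets $d\in S_U^{u+1}$, and together with $q\in\mathcal{J}_u$ and the displayed identity the lemma follows.
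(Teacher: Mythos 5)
Your construction is, in its combinatorial core, exactly the argument behind the result the paper relies on: the paper offers no proof of its own beyond a citation of Lemma 14.14.5 in Hindman and Strauss, and that lemma is proved by precisely the flattening you carry out --- $u=mr$, $q$ the concatenation of the $z_{t(j)}$, and $d$ obtained by grouping the constants that fall between consecutive $f$-values. Your two side verifications are also correct and genuinely needed: the cross-block inequality $z_{t(j)}(r)<z_{t(j+1)}(1)$ really does require upgrading the hypothesis from consecutive indices to arbitrary $y<y'$ by the telescoping chain, since $t(j+1)$ need not equal $t(j)+1$; and your reading of the (ambiguously typeset) definition of $x(m,a,t,f)$, with the terminal coordinate adjoined on the right, is the one consistent with the displayed identity and with the source.

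The obstacle you flag is, however, a genuine gap, and the patch you propose does not close it within this paper's framework. The boundary coordinates $d(1)$, $d(jr+1)$ and $d(u+1)$ are ordered sums of two or three elements of $U$, and nothing in the hypotheses returns such sums to $U$; as written you obtain only $d\in S^{u+1}$, not $d\in S_U^{u+1}$. Choosing $U$ from a neighbourhood base of $e$ with $U+U\subseteq U$ is not something you are entitled to here: $S^w$ is only a semitopological semigroup, the members $U\in e$ are prescribed zero sets of the form $E_{\epsilon}(f)$, and neither separate continuity nor the idempotency of $e$ (which yields only the $A^*$-type refinement of Lemma 4.14 of the cited book, not $U+U\subseteq U$) produces such a base. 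To be fair, this defect is inherited from the paper itself: Lemma 14.14.5 of Hindman and Strauss carries no $U$-constraint on $d$, so the bare citation does not establish the lemma as stated here either. An honest repair is either to weaken the conclusion to $d\in S^{u+1}$ (retaining the $U$-membership only for the interior coordinates, which your bookkeeping does give), or to strengthen the hypothesis so that $a$ and the $c_y$ take their values in some $V$ with $V+V+V\subseteq U$; either way the fix must be stated, not assumed.
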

\begin{proof}
The proof is similar to Lemma 14.14.5 in \cite{hindbook}.
\end{proof}

\begin{lem}\label{lemma6}
Let $S$ be an arbitrary semitopological semigroup. Then the property of $J$-set near an idempotent of $E(S^*)$ is partition regular.
\end{lem}
\begin{proof}
See Lemma 14.14.6 in \cite{hindbook}.
\end{proof}
\begin{theorem}
Let $S$ be an arbitrary semitopological semigroup. Let $\eta\in E(S^*)$, $A\subseteq S$,
and $\eta\in cl_{S^w}A$. Then $J_\eta(S)$ is a compact two sided ideal of
$\eta^*$.
\end{theorem}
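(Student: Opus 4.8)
The plan is to establish that $J_e(S)$ is a closed, nonempty, two-sided ideal of the compact semigroup $e^*$, which is the natural ambient object here since $J_e(S)\subseteq e^*$ and $e^*$ is a compact subsemigroup of $\beta S_d$ by Lemma \ref{lem2.17}(h). I first record that for $p\in e^*$ and $A\in p$ one always has $e\in cl_{S^w}A$, by the very definition of $e^*$, so that ``$J$-set near $e$'' is meaningful for every member of every element of $e^*$. Closedness is then immediate: if $p\in e^*\setminus J_e(S)$, choose $A\in p$ that is not a $J$-set near $e$; the basic open set $\{q\in e^*:A\in q\}$ is a neighborhood of $p$ disjoint from $J_e(S)$, since each of its members has the non-$J$-set $A$ in it. Hence $J_e(S)$ is closed in the compact space $e^*$, and therefore compact. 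For nonemptiness, pick a minimal idempotent $p\in K(e^*)$; for each $A\in p$ we have $p\in K(e^*)\cap cl_{\beta S_d}A$, so $A$ is piecewise syndetic near $e$ by the theorem characterizing piecewise syndeticity near $e$ through $K$, and hence a $J$-set near $e$ by the argument of Theorem \ref{theorem1} transcribed into the $x(m,a,t,f)$-formulation. Thus $K(e^*)\subseteq J_e(S)$, so in particular $J_e(S)\neq\emptyset$.

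For the ideal property I treat the two sides separately by a translate-and-absorb argument. For $e^*+J_e(S)\subseteq J_e(S)$, fix $q\in e^*$, $p\in J_e(S)$ and $A\in q+p$, and set $B=\{x:-x+A\in p\}\in q$. Given $F\in P_f(\mathcal{T}_e)$ and $U\in e$, I select $x\in B$ from a suitable member of $e$ and use that $-x+A\in p$ is a $J$-set near $e$ to obtain $m,c,t$ with $x(m,c,t,f)\in -x+A$ for all $f\in F$; absorbing $x$ into the coordinate of $c$ adjacent to it then produces a witness for $A$. For $J_e(S)+e^*\subseteq J_e(S)$, fix $p\in J_e(S)$, $q\in e^*$ and $A\in p+q$, and set $B=\{x:-x+A\in q\}\in p$, which is a $J$-set near $e$. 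For $F=\{f_1,\dots,f_l\}$ and $U$, a witness $b_i=x(m,a,t,f_i)\in B$ gives $-b_i+A\in q$ for each $i$; intersecting these finitely many members of $q$ with an appropriate neighborhood in $e$ yields a common $w\in q$ with $b_i+w\in A$ for all $i$, and absorbing $w$ into the final coordinate of the witness gives a single $J$-set witness for $A$. Because $e^*$ is a semigroup, $q+p$ and $p+q$ already lie in $e^*$, so both belong to $J_e(S)$.

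The main obstacle is neighborhood control: after absorbing the translating element $x$ (respectively the tail $w$) into a coordinate of the witness, that coordinate must still lie in the prescribed $U\in e$ in order for the modified tuple to be an element of $S_U^{m+1}$. I would resolve this using the idempotency of $e$: for $U\in e$ the starred set $U^\star=\{x\in U:-x+U\in e\}$ again belongs to $e$, hence to every element of $e^*$, so the translating element can be drawn from $U^\star$ and the $J$-set hypothesis applied with the smaller neighborhood $U\cap(-x+U)\in e$, which forces the absorbed sum to remain in $U$. The delicate point to verify is precisely that these starred sets are genuine members of the $e$-ultrafilter, i.e. that $-x+U$ is again a zero set lying in $e$; once this is secured, the remaining recombination bookkeeping that makes the absorbed term fit the $x(u,d,z,\cdot)$ template, for which Lemma \ref{lemma2} is available in the noncommutative case, is routine.
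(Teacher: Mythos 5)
Your core argument coincides with the paper's: both sides of the ideal property are handled by the same translate--and--absorb device, putting $c(1)=b+a(1)$ for $q+p$ and $c(m+1)=a(m+1)+b$ for $p+q$, and closedness is the same one-line observation. Where you genuinely diverge is in the framing and in the nonemptiness step. You work inside $e^*$, which is the only reading that can be correct: the theorem as printed claims $J_e(S)$ is an ideal of $\beta S$, but $J_e(S)\subseteq e^*$ by definition and $e^*$ is not an ideal of $\beta S_d$, so the paper's choice of an arbitrary $q\in\beta S$ cannot work (the intended statement is the analogue of $J_0(S)$ being an ideal of $0^+(S)$). For nonemptiness you prove $K(e^*)\subseteq J_e(S)$ directly via piecewise syndeticity near $e$, whereas the paper appeals to partition regularity (Lemma \ref{lemma6}) and calls the conclusion trivial; your route needs the noncommutative, $x(m,a,t,f)$-version of Theorem \ref{theorem1}, which is not a mere transcription (it is the analogue of Theorem 14.14.7 of Hindman--Strauss and needs machinery like Lemma \ref{lemma3.21}), while the paper's route still owes a check that $S$ is a $J$-set near $e$ and that the ultrafilter produced lands in $e^*$. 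Either way something is elided, but both are viable.

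The one point you leave open is the important one, and you should close it rather than flag it, because it is exactly where the paper's own proof is incomplete: after setting $c(1)=b+a(1)$ the paper simply asserts $c\in S_U^{m+1}$, which fails for an arbitrary $b$ with $-b+A\in p$. Your starred-set repair does go through in this framework. Write $U=E_\epsilon(f)$ with $E_\delta(f)\in e$ for all $\delta$; since $e=e+e$, the definition of the sum in $\mathcal{E}(S)$ gives that $W=\{x\in S:\lambda_x^{-1}(E_\epsilon(f))\in e\}=\{x\in S:-x+U\in e\}$ is itself a member of $e$ (it is one of the sets $E_\delta(q,f)$ required to lie in the first summand, hence a zero set in $e$). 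Since $e\subseteq q$ for every $q\in e^*$ by Lemma \ref{lem2.17}(e), you may choose $b\in U\cap W\cap\{x\in S:-x+A\in p\}\in q$, and then apply the $J$-set hypothesis to $-b+A$ with the member $(-b+U)\cap U$ of $e$ in place of $U$, which forces $b+a(1)\in U$ and keeps the remaining coordinates in $U$. The symmetric use of $W$ on the coordinate $a(m+1)$ handles $p+q$: choosing $a\in S^{m+1}_{U\cap W}$ guarantees $-a(m+1)+U\in e\subseteq q$, so the common tail $b$ can be taken in $(-a(m+1)+U)\cap\bigcap_{f\in F}(-x(m,a,t,f)+A)\in q$ and $c(m+1)=a(m+1)+b\in U$. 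With that verification written out, your proof is complete and is in fact more careful than the one in the paper.
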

\begin{proof}
See Theorem 14.14.4 in \cite{hindbook}.
\end{proof}
\begin{theorem}\label{theorem6}
Let $S$ be a semitopological semigroup, $\eta\in E(S^*)$, $A\subseteq S$, and $\eta\in cl_{S^w}A$, $A\subseteq S$, and $\eta\in cl_{S^w}A$. Then $\overline{A}\cap
J_\eta(S)\neq \emptyset$ if and only if $A$ is a $J$-set near $\eta$.
\end{theorem}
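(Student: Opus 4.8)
The plan is to prove the two implications separately, with the forward direction being immediate and the reverse one carrying all the weight. For the direction ``$\overline{A}\cap J_e(S)\neq\emptyset$ implies $A$ is a $J$-set near $e$'', I would just pick $p\in\overline{A}\cap J_e(S)$. Then $A\in p$ and $p\in J_e(S)$, so by the very definition of $J_e(S)$ every member of $p$ is a $J$-set near $e$; in particular $A$ is.

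For the converse, assume $A$ is a $J$-set near $e$. The goal is to produce an ultrafilter $p\in\beta S_d$ with $A\in p$ all of whose members are $J$-sets near $e$. The first thing I would record is that such a $p$ automatically lies in $e^*$: since each $B\in p$ is a $J$-set near $e$, each satisfies $e\in cl_{S^w}B$, whence $e\in\bigcap_{B\in p}cl_{S^w}B$, which is exactly the statement $p\in e^*$ by the definition of $e^*$. Thus once $p$ is found we get $p\in J_e(S)$ and $A\in p$, i.e. $p\in\overline{A}\cap J_e(S)$, and we are done.

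To build $p$ I would exploit the partition regularity supplied by Lemma \ref{lemma6}. Write $\Omega$ for the family of all $J$-sets near $e$. First, $\Omega$ is upward closed: if $B\in\Omega$ and $B\subseteq C$, then $e\in cl_{S^w}B\subseteq cl_{S^w}C$, and the witnesses $x(m,a,t,f)\in B\subseteq C$ show that $C\in\Omega$. Second, Lemma \ref{lemma6} says precisely that $\Omega$ is partition regular: if $B_1\cup B_2\in\Omega$, then $B_1\in\Omega$ or $B_2\in\Omega$. Consequently the complementary family $\mathcal{K}=\{C\subseteq S:C\notin\Omega\}$ is downward closed and closed under finite unions, i.e. it is an ideal of subsets of $S$. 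Now I would consider
\[
\mathcal{G}=\{A\}\cup\{S\setminus C:C\in\mathcal{K}\}.
\]
This family has the finite intersection property: a finite subintersection containing $A$ has the form $A\setminus(C_1\cup\cdots\cup C_n)$ with each $C_i\in\mathcal{K}$, and were this empty we would have $A\subseteq C_1\cup\cdots\cup C_n\in\mathcal{K}$, forcing $A\in\mathcal{K}$ by upward closure, contradicting $A\in\Omega$. Extending $\mathcal{G}$ to an ultrafilter $p$, we get $A\in p$, and for every $C\in\mathcal{K}$ we have $S\setminus C\in p$, so $C\notin p$; hence every member of $p$ lies in $\Omega$. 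Combined with the observation of the previous paragraph, this gives $p\in J_e(S)$, finishing the argument.

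The step I expect to be the main obstacle is the partition regularity itself, specifically reconciling it with the standing closure hypothesis built into the notion of a $J$-set near $e$. Lemma \ref{lemma6} is stated under the assumptions $e\in cl_{S^w}B_1$ and $e\in cl_{S^w}B_2$, whereas in a general splitting $B_1\cup B_2\in\Omega$ one piece, say $B_1$, may satisfy $e\notin cl_{S^w}B_1$, so that $B_1$ is not even a candidate $J$-set near $e$. Since $e\in cl_{S^w}(B_1\cup B_2)=cl_{S^w}B_1\cup cl_{S^w}B_2$ then forces $e\in cl_{S^w}B_2$, I would have to argue directly that $B_2\in\Omega$. The intended route is to reduce to the case where both pieces have $e$ in their closure: using $e\notin cl_{S^w}B_1$ to separate $e$ from $B_1$ by a zero-set neighbourhood $U'\in e$, and using that the witnesses $x(m,a,t,f)$ built from $a\in S^{m+1}_{U'}$ and $f\in\mathcal{T}_e$ are forced into a neighbourhood of the idempotent $e$ (so that they cannot land in $B_1$), one transfers the $J$-set condition from $B_1\cup B_2$ to $B_2$. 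Making this ``sums near $e$ stay near $e$'' step precise, via idempotency of $e$ and separate continuity in $S^w$ together with the control on the index provided by Lemma \ref{lemma1}, is the only genuinely delicate point; everything else is the routine ultrafilter extension carried out above.
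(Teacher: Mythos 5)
Your proposal is correct and follows essentially the same route as the paper: the forward direction is immediate from the definition of $J_e(S)$, and the converse combines the partition regularity of $J$-sets near $e$ (Lemma \ref{lemma6}) with the standard fact that a partition regular family containing $A$ admits an ultrafilter $p$ with $A\in p$ all of whose members lie in the family --- the paper cites this as Theorem 3.11 of \cite{hindbook}, while you reprove it by hand via the dual ideal and the finite intersection property. The two details you add --- that such a $p$ automatically lies in $e^*$ because each $B\in p$ satisfies $e\in cl_{S^w}B$, and that Lemma \ref{lemma6} must be reconciled with splittings in which one cell fails the closure hypothesis --- are both genuinely needed and are passed over in silence by the paper's two-line proof.
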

\begin{proof}
The necessity is trivial. By Lemma \ref{lemma6}, $J$-sets are partition regular.
So, if $A$ is a $J$-set near $\eta$, by Theorem 3.11 in \cite{hindbook} , there is some $p\in \beta S$
such that $A\in p$ and for every $B\in p$, $B$ is a $J$-set near $\eta$.
\end{proof}
\begin{cor}
Let $S$ be a semitopological semigroup, $\eta\in E(S^*)$, $A\subseteq S$,  $\eta\in cl_{S^w}A$, and $A$ be a piecewise syndetic near $\eta$
subset of $S$. Then $A$ is a $J$-set near $\eta$.
\end{cor}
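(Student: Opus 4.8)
The plan is to deduce this corollary by chaining together two results already established in the excerpt, namely Theorem~\ref{theorem6} (which characterizes $J$-sets near $e$ via the ideal $J_e(S)$) and the piecewise-syndetic characterization theorem stating that $A$ is piecewise syndetic near $e$ if and only if $K \cap cl_{\beta S}(A) \neq \emptyset$, where $K$ is the smallest ideal of $e^*$. The key structural fact I would exploit is that $J_e(S)$ is a two-sided ideal of $\beta S$, and in particular an ideal containing or meeting $K$; since every smallest (minimal) ideal is contained in any two-sided ideal, I expect $K \subseteq J_e(S)$.

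First I would start from the hypothesis that $A$ is piecewise syndetic near $e$. By the piecewise-syndetic characterization theorem in the excerpt, this gives $K \cap cl_{\beta S}(A) \neq \emptyset$, so I may pick a point $p \in K \cap cl_{\beta S}(A)$; equivalently $p \in K$ and $A \in p$. Next I would argue that $K \subseteq J_e(S)$. Here the reasoning is that $J_e(S)$ is a two-sided ideal of $\beta S$, hence in particular of $e^*$ (one would check $J_e(S) \subseteq e^*$ by definition, and that it absorbs multiplication on both sides within $e^*$), and $K$ is the \emph{smallest} ideal of $e^*$; by minimality $K$ is contained in every ideal of $e^*$, so $K \subseteq J_e(S)$. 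Therefore $p \in J_e(S)$, and combined with $A \in p$ this yields $A \in p$ with $p \in J_e(S) \cap cl_{\beta S}(A)$, i.e. $\overline{A} \cap J_e(S) \neq \emptyset$. Finally, by Theorem~\ref{theorem6} this is equivalent to $A$ being a $J$-set near $e$, which is the desired conclusion.

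The main obstacle I anticipate is the inclusion $K \subseteq J_e(S)$, specifically verifying that $J_e(S)$ really is an ideal \emph{of $e^*$} (not merely of $\beta S$) so that minimality of $K$ within $e^*$ applies. The subtlety is that $K$ is defined as the smallest ideal of $e^*$, whereas $J_e(S)$ was shown to be a two-sided ideal of $\beta S$; I would need the containment $J_e(S) \subseteq e^*$ together with the fact that intersecting a two-sided ideal of $\beta S$ with the subsemigroup $e^*$ (and using $J_e(S) \subseteq e^*$ by its very definition) produces an ideal of $e^*$, after which the universal property of the smallest ideal forces $K \subseteq J_e(S)$. Alternatively, and perhaps more cleanly, I could bypass this by noting that $A$ piecewise syndetic near $e$ implies $A$ is itself a $J$-set near $e$ through a direct route: every minimal idempotent lies in $K$, and one checks directly that each $p \in K$ satisfies the $J_e(S)$ membership condition using that piecewise syndeticity is inherited by members of such ultrafilters. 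Either way, once the inclusion $K \subseteq J_e(S)$ is secured, the remainder is a one-line application of Theorem~\ref{theorem6}.
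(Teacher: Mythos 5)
Your proposal is correct and follows essentially the same route as the paper: the paper also combines the characterization of piecewise syndetic sets near $e$ (giving $\overline{A}\cap K(e^*)\neq\emptyset$) with the inclusion $K(e^*)\subseteq J_e(S)$ and then invokes Theorem~\ref{theorem6}. Your extra care in justifying $K\subseteq J_e(S)$ (via $J_e(S)\subseteq e^*$ and minimality of the smallest ideal) fills in a step the paper states without comment, but it is the same argument.
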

\begin{proof}
Since $A$ is piecewise syndetic near $\eta$, $\overline{A}\cap K(\eta^*)\neq \emptyset$. Therefore $K(\eta^*)\subseteq J_\eta(S)$ implies that
$\overline{A} \cap J_\eta(S) \neq \emptyset$ so by Theorem \ref{theorem6}, $A$ is a $J$-set near $\eta$.
\end{proof}

\begin{theorem}\label{the2}
Let $S$ be a semitopological semigroup, $\eta\in E(S^*)$, $A\subseteq S$, and $\eta\in cl_{S^w}A$. If there is an idempotent in $\overline{A}\cap
J_\eta(S)$, then $A$ is a $C$-set near $\eta$.
\end{theorem}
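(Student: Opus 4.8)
The plan is to follow the template of the Central Sets Theorem near $e$ (Theorem \ref{th1}), but to replace the minimal idempotent of $e^*$ by the given idempotent of $\overline{A}\cap J_e(S)$ and to carry out all of the bookkeeping in the $x(m,a,t,f)$-notation that defines a $C$-set near $e$. First I would pick an idempotent $p\in \overline{A}\cap J_e(S)$, so that $A\in p$, and set $A^*=\{x\in A:-x+A\in p\}$. Since $p$ is an idempotent, Lemma 4.14 in \cite{hindbook} gives $A^*\in p$ and $-x+A^*\in p$ for every $x\in A^*$. The decisive feature of $p\in J_e(S)$ is that every member of $p$ is a $J$-set near $e$; in particular $A^*$, and later every set of the form $A^*\cap\bigcap_{x}(-x+A^*)$ that stays in $p$, is a $J$-set near $e$. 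I would then fix $U\in e$ and construct $m_U(F)\in\mathbb{N}$, $\alpha_U(F)\in S_U^{m_U(F)+1}$ and $\tau_U(F)\in\mathcal{J}_{m_U(F)}$ by induction on $|F|$ for $F\in P_f(\mathcal{T}_e)$, arranging as the strengthened inductive hypothesis that every admissible sum $\sum_{i=1}^n x(m_U(G_i),\alpha_U(G_i),\tau_U(G_i),f_i)$ lies in $A^*$ (not merely in $A$); this strengthening is what lets the induction close.

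For the base case $F=\{f\}$, since $A^*$ is a $J$-set near $e$ its definition supplies, for the fixed $U$, some $m$, $a\in S_U^{m+1}$ and $t\in\mathcal{J}_m$ with $x(m,a,t,f)\in A^*$, and I set $m_U(\{f\})=m$, $\alpha_U(\{f\})=a$, $\tau_U(\{f\})=t$. For $|F|>1$, assume the data defined on all nonempty proper subsets of $F$. I would form the finite set
\[
M_U=\Big\{\sum_{i=1}^n x(m_U(G_i),\alpha_U(G_i),\tau_U(G_i),f_i):\ \emptyset\neq G_1\subset\cdots\subset G_n\subsetneq F,\ f_i\in G_i\Big\},
\]
which lies in $A^*$ by the inductive hypothesis, put $k=\max\{\tau_U(G)(m_U(G)):\emptyset\neq G\subsetneq F\}$, and set $B=A^*\cap\bigcap_{x\in M_U}(-x+A^*)$. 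Because each $x\in M_U\subseteq A^*$ forces $-x+A^*\in p$, we get $B\in p$, so $B$ is a $J$-set near $e$. Applying the shift form of the $J$-set property (the unlabeled lemma preceding Lemma \ref{lemma2}) to $B$ with threshold $k$ produces $m$, $a\in S_U^{m+1}$ and $t\in\mathcal{J}_m$ with $t(1)>k$ and $x(m,a,t,f)\in B$ for every $f\in F$; I then define $m_U(F)=m$, $\alpha_U(F)=a$, $\tau_U(F)=t$.

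The verification would then be routine. Condition (1) is immediate from $\tau_U(F)(1)=t(1)>k\geq\tau_U(G)(m_U(G))$ for each proper subset $G\subsetneq F$, and for an arbitrary comparable pair it is secured at the stage where the larger set is defined. For condition (2), given a chain $G_1\subset\cdots\subset G_n=F$ and $f_i\in G_i$, the case $n=1$ gives $x(m_U(F),\alpha_U(F),\tau_U(F),f_1)\in B\subseteq A^*$, while for $n>1$ the head $y=\sum_{i=1}^{n-1}x(m_U(G_i),\alpha_U(G_i),\tau_U(G_i),f_i)$ lies in $M_U$, so the final summand lies in $B\subseteq -y+A^*$ and hence $\sum_{i=1}^n x(\cdots)=y+x(m_U(F),\alpha_U(F),\tau_U(F),f_n)\in A^*\subseteq A$. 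Since $U\in e$ was arbitrary, $A$ is a $C$-set near $e$.

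I expect the one genuinely delicate point to be this last decomposition: one must keep the partial sums in the exact left-to-right order in which the $x$-blocks concatenate, so that the head $y$ sits on the left and the left-quotient $-y+A^*$ applies. This is precisely what makes the argument valid without commutativity, so no rearrangement of summands is ever performed; the remaining work is purely the index-bookkeeping guaranteeing that the blocks $\tau_U(G_i)$ are strictly increasing and pairwise separated, which is handled automatically by the threshold $k$.
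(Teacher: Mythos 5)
Your argument is correct and is essentially the paper's own proof: the paper simply defers to Theorem 14.14.9 of \cite{hindbook}, and what you have written is precisely that argument (idempotent $p\in\overline{A}\cap J_e(S)$, the star set $A^*$ via Lemma 4.14 of \cite{hindbook}, and the induction on $|F|$ using the shifted $J$-set lemma to keep $\tau_U(F)(1)$ above the threshold $k$) transplanted to the near-$e$, $x(m,a,t,f)$ setting. Your closing remark about keeping the blocks in left-to-right order so that only left quotients $-y+A^*$ are needed is exactly the point that makes the proof work without commutativity.
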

\begin{proof}
See Theorem 14.14.9 in \cite{hindbook}.
\end{proof}
\begin{cor}
Let $S$ be a semitopological semigroup, $\eta\in E(S^*)$, $A\subseteq S$, $\eta\in cl_{S^w}A$, and $A\subseteq S$ be a central set near $\eta$ in $S$. Then $A$ is
a $C$-set near $\eta$.
\end{cor}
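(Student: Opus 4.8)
The plan is to reduce everything to Theorem \ref{the2}, which already does the hard work: it produces the full $C$-set conclusion from the single hypothesis that $\overline{A}\cap J_e(S)$ contains an idempotent. So the entire task is to manufacture such an idempotent out of the assumption that $A$ is central near $e$, and the argument should be a short piece of bookkeeping rather than a fresh construction.

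First I would unwind the definition of central near $e$. By definition there is an idempotent $p$ lying in $K$, the smallest ideal of $e^*$, with $A\in p$. Recalling the basis convention $\overline{A}=cl_{\beta S_d}A=\{q\in\beta S:A\in q\}$, the condition $A\in p$ is literally the statement $p\in\overline{A}$; so the idempotent $p$ already sits inside $\overline{A}$. Next I would locate $p$ inside $J_e(S)$. For this I invoke the inclusion $K=K(e^*)\subseteq J_e(S)$: since $J_e(S)$ is a two-sided ideal of $e^*$ and the smallest ideal $K(e^*)$ is contained in every two-sided ideal of $e^*$, this inclusion holds, and it is exactly the inclusion already used in the corollary immediately preceding Theorem \ref{the2}. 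Consequently $p\in\overline{A}\cap J_e(S)$ and $p$ is idempotent.

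With the idempotent in hand, applying Theorem \ref{the2} to it yields that $A$ is a $C$-set near $e$, which finishes the proof. The only genuine content is the containment $K(e^*)\subseteq J_e(S)$ together with the identification $p\in\overline{A}\iff A\in p$; everything else is immediate from the definition of centrality near $e$. I therefore do not anticipate a real obstacle here. The single point worth stating explicitly is that the idempotent witnessing centrality lands in $J_e(S)$ because it lies in the smallest ideal, so that the hypotheses of Theorem \ref{the2} are met verbatim.
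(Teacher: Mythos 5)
Your argument is correct and is precisely the reasoning the paper leaves implicit (its proof is just ``It is obvious''): the idempotent witnessing centrality lies in $K(e^*)\subseteq J_e(S)$ and in $\overline{A}$, so Theorem \ref{the2} applies directly. This matches the inclusion already used in the preceding corollary, so there is nothing further to add.
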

\begin{proof}
It is obvious.
\end{proof}

\begin{lem}\label{lemma3.21}
Let $R$ be a set, let $(D,\leq)$ be a directed set, and let $S$ be a semitopological subsemigroup of $(T,+)$.
Let $\{T_i\}_{i\in D}$ be a decreasing family of nonempty subsets of $S$ such that

$1)$ $\eta\in cl_{T^w}T_i$,

$2)$ $\bigcap_{i\in D}T_i=\emptyset$, and

$3)$ for each $i\in D$ and each $x\in T_i$ there is some $j\in D$ such that $x+T_j\subseteq T_i$.

 Let
$T=\bigcap_{i\in D}cl_{\beta S_d}T_i$. Then $T$ is a compact subsemigroup of $\eta^*(S)$. Let $\{E_i\}_{i\in D}$ and
$\{I_i\}_{i\in D}$ be decreasing families of nonempty subsets of $\times_{t\in J}S$ with the following
properties:

$(a)$ for each $i\in D$, $I_i\subseteq E_i\subseteq \times_{t\in J} T_i$,

$(b)$ for each $i\in D$ and each $\vec{x} \in I_i$ there exists $j\in D$ such that $\vec{x}+E_j\subseteq I_i$,
and

$(c)$ for each $i \in D$ and each $\vec{x}\in E_i\setminus I_i$ there exists $j\in D$ such that
$\vec{x}+ E_j\subseteq E_i$ and $\vec{x} + I_j\subseteq I_i$.

Let $Y=\times_{t\in J}\eta^*(S)$, let $E=\bigcap_{i\in D}cl_YE_i$, and let $I=\bigcap_{i\in D}cl_Y I_i$. Then $E$ is
a subsemigroup of $\times_{t\in J}T$ and $I$ is an ideal of $E$. If, in addition, either

$(d)$ for each $i \in D$, $T_i=S$ and $\{a\in S:\overline{a}\notin E_i\}$ is not piecewise syndetic near $\eta$, or

$(e)$ for each $i\in D$ and each $a\in T_i$ , $\overline{a}\in E_i$,

then given
any $p\in K(T)$, one has $\overline{p}\in E\cap K(\times_{t\in J}T)=K(E)\subseteq I$.
\end{lem}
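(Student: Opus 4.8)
The plan is to follow the template already established in the proof of Theorem \ref{theorem1}, transcribing the concrete ``decreasing family'' bookkeeping used there into the abstract hypotheses (1)--(3) and (a)--(c). First I would dispose of the claims about $T$. Being an intersection of the closed sets $cl_{\beta S_d}T_i$, $T$ is closed in $\beta S_d$ and hence compact. For $p\in T$ we have $T_i\in p$ for every $i\in D$, so the inclusion $T\subseteq e^*(S)$ follows from hypothesis (1) (the $T_i$ cluster at $e$) together with the characterisations of $e^*$ in Lemma \ref{lem2.17}, in particular parts (c) and (e). That $T$ is a subsemigroup is where hypothesis (3) enters: given $p,q\in T$ and $i\in D$, for each $x\in T_i$ choose $j$ with $x+T_j\subseteq T_i$; then $T_j\subseteq -x+T_i$ and $T_j\in q$, so $-x+T_i\in q$, whence $T_i\subseteq\{x\in S:-x+T_i\in q\}\in p$ and therefore $T_i\in p+q$. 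As $i$ is arbitrary, $p+q\in T$.

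Next comes the heart of the argument: that $E$ is a subsemigroup of $\times_{t\in J}T$ and $I$ is an ideal of $E$. By Theorem 2.22 of \cite{hindbook}, $Y=\times_{t\in J}e^*(S)$ is a compact right topological semigroup on which each $\lambda_{\vec s}$ (for $\vec s\in\times_{t\in J}S$) is continuous, so $E$ and $I$ are well-defined compact subsets; the containment $E\subseteq\times_{t\in J}T$ is then immediate from (a) once $T\subseteq e^*(S)$ is known, since the closure of $\times_{t\in J}T_i$ splits into a product of the factor closures. The semigroup/ideal structure is proved exactly as in Theorem \ref{theorem1}: fix $p,q\in E$ and $i\in D$, and let $W$ be a basic neighbourhood of $p+q$. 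Using continuity of $\rho_q$ pick a neighbourhood $V$ of $p$ with $V+q\subseteq W$ and choose $\vec x\in E_i\cap V$ (taking $\vec x\in I_i$ when $p\in I$); hypotheses (b) and (c) then supply $j\in D$ with $\vec x+E_j\subseteq E_i$, together with $\vec x+I_j\subseteq I_i$ in the ideal cases. Using continuity of $\lambda_{\vec x}$ pick a neighbourhood $Q$ of $q$ with $\vec x+Q\subseteq W$ and choose $\vec y\in E_j\cap Q$ (taking $\vec y\in I_j$ when $q\in I$). Then $\vec x+\vec y\in E_i\cap W$, and $\vec x+\vec y\in I_i\cap W$ whenever $p\in I$ or $q\in I$. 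Letting $i$ range over $D$ gives $p+q\in E$, and $p+q\in I$ in the ideal cases; this is precisely the case analysis that (b) and (c) are engineered to supply.

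Finally, assume (d) or (e) and fix $p\in K(T)$. By Theorem 2.23 of \cite{hindbook}, $K(\times_{t\in J}T)=\times_{t\in J}K(T)$, so $\overline p=(p,p,\dots)\in K(\times_{t\in J}T)$. The crux is to show $\overline p\in E$, i.e. $\overline p\in cl_Y E_i$ for each $i$. Given a basic neighbourhood $\times_{t}\overline{C_t}$ of $\overline p$ with each $C_t\in p$, set $C=\bigcap_{t}C_t\in p$. Under (e) I would intersect with $T_i\in p$ and pick $a\in C\cap T_i$, so that $\overline a\in\times_t\overline{C_t}$ and, by (e), $\overline a\in E_i$, whence the neighbourhood meets $E_i$. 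Under (d) the set $\{a\in S:\overline a\notin E_i\}$ is not piecewise syndetic near $e$, so by the characterisation that $B$ is piecewise syndetic near $e$ iff $K\cap cl_{\beta S}(B)\neq\emptyset$, together with $p\in K(T)$, we get $\{a:\overline a\notin E_i\}\notin p$, i.e. $\{a:\overline a\in E_i\}\in p$; intersecting with $C$ and choosing $a$ inside it again yields $\overline a\in(\times_t\overline{C_t})\cap E_i$. Either way $\overline p\in cl_Y E_i$, so $\overline p\in E$. Thus $\overline p\in E\cap K(\times_{t\in J}T)$, which is therefore nonempty, so Theorem 1.65 of \cite{hindbook} gives $K(E)=E\cap K(\times_{t\in J}T)$; since $I$ is an ideal of $E$ we conclude $\overline p\in E\cap K(\times_{t\in J}T)=K(E)\subseteq I$, as required.

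I expect the main obstacle to be the verification that $\overline p\in E$ under hypothesis (d): it is the only place where global (rather than translate-based) information is used, and it forces one to reconcile the meaning of $K(T)$ with the ``near $e$'' notion of piecewise syndeticity through the characterisation proved at the end of Section 2 (and to check that the reduction $T_i=S$ is compatible with working inside $e^*(S)$ rather than all of $\beta S_d$). By contrast, the structural parts --- that $T$ is a compact subsemigroup and that $E,I$ form a subsemigroup with an ideal --- are routine once they are modelled line for line on the proof of Theorem \ref{theorem1}.
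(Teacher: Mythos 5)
Your proposal is correct and follows essentially the same route as the paper's proof: the paper simply cites Theorem 4.20 and the proof of Lemma 14.9 of Hindman--Strauss for the compact-subsemigroup and subsemigroup/ideal parts (the very arguments you write out, mirroring Theorem \ref{theorem1}), then performs the same reduction to showing $\overline{p}\in E$ and handles case $(d)$ exactly as you do, via the piecewise-syndetic-near-$e$ characterization and $p\in K(e^*(S))$.
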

\begin{proof}
See Lemma 14.9 in \cite{hindbook}.
\end{proof}
\begin{theorem}
Let $S$ be a semitopological semigroup, $\eta\in E(S^*)$, $A\subseteq S$, and $\eta\in cl_{S^w}A$. Then $A$ is a $C$-set near $\eta$
if and only if there is an idempotent in $\overline{A}\cap J_\eta(S)$.
\end{theorem}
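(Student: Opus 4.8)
The plan is to prove the two implications separately, and to observe at once that one of them requires no new work. If $p$ is an idempotent lying in $\overline{A}\cap J_e(S)$, then the hypothesis of Theorem \ref{the2} is met verbatim, so that theorem already gives that $A$ is a $C$-set near $e$. Thus the entire content of the statement is the converse: assuming $A$ is a $C$-set near $e$, I must produce an idempotent inside $\overline{A}\cap J_e(S)$. Since $J_e(S)$ is a compact two-sided ideal of $\beta S$, while $\overline{A}$ is merely closed, the idempotent cannot be found by a naive appeal to Ellis's theorem; the job is to exhibit a compact subsemigroup of $\beta S$ that is contained in $\overline{A}$ and meets $J_e(S)$.

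For the converse I would first fix, for each $U\in e$, the witnessing functions $m_U,\alpha_U,\tau_U$ supplied by the definition of a $C$-set near $e$, and for each $F\in P_f(\mathcal{T}_e)$ assemble the set $Q_F$ of all sums $\sum_{i=1}^n x(m_U(G_i),\alpha_U(G_i),\tau_U(G_i),f_i)$ taken over $U\in e$, over strictly increasing chains $F\subseteq G_1\subsetneq\cdots\subsetneq G_n$ in $P_f(\mathcal{T}_e)$, and over selections $f_i\in G_i$. Clause (2) of the definition gives $Q_F\subseteq A$, while the length-one chains $G\supseteq F$ show that each $Q_F$ is itself a $J$-set near $e$: for a target $(\tilde F,\tilde U)$ the block $x\big(m_{\tilde U}(G),\alpha_{\tilde U}(G),\tau_{\tilde U}(G),\cdot\big)$ with $G\supseteq F\cup\tilde F$ has its $\alpha$-values in $\tilde U$ and carries every $f\in\tilde F$ into $Q_F$. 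Consequently Theorem \ref{theorem6} yields $\overline{Q_F}\cap J_e(S)\neq\emptyset$, and because $F\subseteq F'$ forces $Q_{F'}\subseteq Q_F$, the family $\{\overline{Q_F}\cap J_e(S)\}_{F}$ is downward directed. If one can also show, by a telescoping argument (extend a sum whose top block comes from a level $G_n$ by any chain beginning strictly above $G_n$, exactly as in the product computation of Theorem \ref{theorem1}), that $T:=\bigcap_F\overline{Q_F}$ is a subsemigroup, then $T\cap J_e(S)$ is a nonempty compact subsemigroup of $\beta S$ contained in $\overline{A}$, and being a compact right topological semigroup it contains an idempotent $p$, which is the desired point of $\overline{A}\cap J_e(S)$.

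The step I expect to be the real obstacle is the bookkeeping in the neighborhood parameter $U\in e$. The witnesses $m_U,\alpha_U,\tau_U$ genuinely depend on $U$, and in a merely semitopological semigroup a finite sum of elements lying near $e$ need not return to a prescribed neighborhood of $e$; these two facts obstruct both the free merging of sums built from different $U$'s (needed to close the telescoping) and the free combining of the sets for different $U$ (needed for the finite intersection property), and they make even $e\in cl_{S^w}Q_F$ nontrivial. The way I would absorb this is to carry out the whole construction inside the $e$-neighborhood tails and to invoke Lemma \ref{lemma3.21}: taking the $T_i$ to be the neighborhood tails (so that $e\in cl_{T^w}T_i$ and $T=\bigcap_i cl_{\beta S_d}T_i\subseteq e^*$ is a compact subsemigroup by Lemma \ref{lem2.17}(h)), and taking $E_i$ and $I_i$ to be the vectors of blocks and of $C$-set sums read off from the witnesses, one verifies hypotheses (a)--(c) together with (e) and pushes $K(T)$ diagonally into $K(E)\subseteq I$. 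Reading off a single coordinate of a minimal idempotent of $E$ then delivers an idempotent $p\in e^*$ with $A\in p$ all of whose members are $J$-sets near $e$, partition regularity of $J$-sets (Lemma \ref{lemma6}) being precisely what keeps every ultrafilter produced along the way inside $J_e(S)$. This places $p$ in $\overline{A}\cap J_e(S)$ and completes the converse.
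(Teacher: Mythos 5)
Your treatment of the sufficiency is exactly right: it follows verbatim from Theorem \ref{the2}, and the paper disposes of it in one line. Your opening move for the necessity is also attractive and genuinely different from the paper's: the observation that each $Q_F$ is itself a $J$-set near $e$ (via length-one chains above $F\cup\tilde F$, whose $\alpha$-values can be placed in any prescribed $\tilde U$ simply by using the witnesses attached to that $\tilde U$), followed by Theorem \ref{theorem6} and a compactness/directedness argument to get $\bigcap_F\overline{Q_F}\cap J_e(S)\neq\emptyset$, would---if the rest held---replace the paper's longest computation (the direct verification that every $p\in K(T_U)$ lies in $J_e(S)$) with a two-line argument.

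But the crux is the point you flag yourself and then do not resolve: that $T=\bigcap_F\overline{Q_F}$ is a semigroup. With your definition of $Q_F$ (the union over $U\in e$ of the single-$U$ sums) the telescoping fails, because clause (2) of the definition of a $C$-set near $e$ only guarantees that $\sum_{i}x(m_U(G_i),\alpha_U(G_i),\tau_U(G_i),f_i)$ lands in $A$ when one and the same $U$ is used in every term; the concatenation of a $U$-sum with a $U'$-sum is not of this form and need not lie in $Q_F$, nor even in $A$. If instead you fix $U$ and work with the single-$U$ sets $T_{F,U}$ (as the paper does), the semigroup property holds but your $J$-set argument collapses: the witnesses now land only in $S_U^{m+1}$, not in $S_{\tilde U}^{m+1}$ for arbitrary $\tilde U\in e$, so Theorem \ref{theorem6} no longer applies. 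Your proposed escape---``carry out the construction inside the neighborhood tails and invoke Lemma \ref{lemma3.21}''---is a gesture at the paper's actual proof rather than an argument: you never specify the families $E_i$, $I_i$ or verify hypotheses (a)--(c) and (e), and the setup you describe does not match the one that works. The paper fixes $U$, takes $D=\{G\in P_f(\mathcal{T}_e):F\subseteq G\}$ with the sets $T_{G,U}$ themselves (not neighborhood tails) as the decreasing family, defines $I_{G,U}$ as vectors of partial sums carrying a designated set $C_2$ of ``free'' indices into which any $f\in F$ may be substituted, and uses the conclusion $\overline{p}\in\bigcap_{G\in D}\overline{I_{G,U}}$ for $p\in K(T_U)$ to extract, for each $B\in p$, the witnesses $v$, $c$, $\mu$ showing $B$ is a $J$-set near $e$; partition regularity of $J$-sets (Lemma \ref{lemma6}) plays no role in that step, contrary to your closing sentence. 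Until the tension between the semigroup property and the $J$-set property in the parameter $U$ is actually resolved, the necessity is not proved.
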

\begin{proof}
The proof is similar to Theorem 14.15.1 in \cite{hindbook}.
\end{proof}
\bibliographystyle{alpha}

\end{document}